\documentclass[11pt]{amsproc}
\usepackage{amssymb,amsmath,amsthm,anysize}
\usepackage{longtable}
\usepackage{booktabs}
\usepackage{multirow,bigstrut,bigdelim,color}
\usepackage[shortlabels]{enumitem}

\marginsize{2cm}{2cm}{2cm}{2cm}

\newtheorem{Thm}{Theorem}[section]
\newtheorem{Cor}[Thm]{Corollary}

\newtheorem{La}[Thm]{Lemma}

\newtheorem{Claim}[Thm]{Claim}

\newenvironment{Prf}{\noindent\textbf{Proof.}}{\hfill $\Box$ \medskip}

\newcommand{\Inn}{\operatorname{Inn}}
\newcommand{\Aut}{\operatorname{Aut}}
\newcommand{\Out}{\operatorname{Out}}
\newcommand{\Hol}{\operatorname{Hol}}
\newcommand{\PG}{\operatorname{PG}}
\newcommand{\Alt}{\mathrm{Alt}}
\newcommand{\inc}{\,\mathrm{I}\,}

\renewcommand\le{\leqslant}
\renewcommand\ge{\geqslant}

\setlist[description]{leftmargin=1ex,font=\normalfont\bfseries\space,style=nextline}

\begin{document}

\title[Primitive generalised quadrangles of holomorph type]{Point-primitive, line-transitive generalised quadrangles of holomorph type}
\author{John Bamberg, Tomasz Popiel, Cheryl E. Praeger}

\address{
Centre for the Mathematics of Symmetry and Computation\\
School of Mathematics and Statistics\\
The University of Western Australia\\
35 Stirling Highway, Crawley, W.A. 6009, Australia.\newline
Email: \texttt{\{john.bamberg, tomasz.popiel, cheryl.praeger$^\dag$\}@uwa.edu.au}\\
\newline $^\dag$ Also affiliated with King Abdulaziz University, Jeddah, Saudi Arabia.
}

\thanks{The first author acknowledges the support of the Australian Research Council (ARC) Future Fellowship FT120100036. 
The second author acknowledges the support of the ARC Discovery Grant DP1401000416. 
The research reported in the paper forms part of the ARC Discovery Grant DP1401000416 of the third author.}

\subjclass[2010]{primary 51E12; secondary 20B15, 05B25}

\keywords{generalised quadrangle, primitive permutation group}

\maketitle

\begin{abstract}
Let $G$ be a group of collineations of a finite thick generalised quadrangle $\Gamma$. 
Suppose that $G$ acts primitively on the point set $\mathcal{P}$ of $\Gamma$, and transitively on the lines of $\Gamma$. 
We show that the primitive action of $G$ on $\mathcal{P}$ cannot be of holomorph simple or holomorph compound type. 
In joint work with Glasby, we have previously classified the examples $\Gamma$ for which the action of $G$ on $\mathcal{P}$ is of affine type. 
The problem of classifying generalised quadrangles with a point-primitive, line-transitive collineation group is therefore reduced to the case where there is a unique minimal normal subgroup $M$ and $M$ is non-Abelian.
\end{abstract}

\section{Introduction}

A {\em partial linear space} is a point--line incidence geometry in which any two distinct points are incident with at most one line. 
All partial linear spaces considered in this paper are assumed to be finite. 
A {\em generalised quadrangle} $\mathcal{Q}$ is a partial linear space that satisfies the {\em generalised quadrangle axiom}: given a point $P$ and line $\ell$ not incident with $P$, there is a unique line incident with $P$ and concurrent with $\ell$. 
This axiom implies, in particular, that $\mathcal{Q}$ contains no triangles. 
If each point of $\mathcal{Q}$ is incident with at least three lines, and each line is incident with at least three points, then $\mathcal{Q}$ is said to be {\em thick}. 
In this case, there exist constants $s,t \ge 2$ such that each point (line) is incident with exactly $t+1$ lines ($s+1$ points), and $(s,t)$ is called the {\em order} of $\mathcal{Q}$. 
Generalised quadrangles were introduced by Tits~\cite{Tits:1959cl}, together with the other {\em generalised polygons}, in an attempt to find a systematic geometric interpretation for the simple groups of Lie type. 
It is therefore very natural to ask which groups arise as collineation groups of generalised quadrangles.

A topic of particular interest is that of generalised quadrangles admitting collineation groups $M$ that act {\em regularly} on points, where the point set is identified with $M$ acting on itself by right multiplication. 
Ghinelli~\cite{MR1153980} showed that a Frobenius group or a group with non-trivial centre cannot act regularly on the points of a generalised quadrangle of order $(s,t)$ if $s$ is even and $s=t$, and Yoshiara~\cite{MR2287459} showed that a generalised quadrangle with $s=t^2$ does not admit a point-regular collineation group. 
Regular groups arise, in particular, as subgroups of certain {\em primitive} groups. 
Bamberg et~al.~\cite{Bamberg:2012yf} showed that a group $G$ acting primitively on both the points and the lines of a generalised quadrangle must be almost simple. 
The present authors and Glasby~\cite[Corollary~1.5]{OurAffine} sought to weaken this assumption to primitivity on points and {\em transitivity} on lines, and, using a result of De Winter and Thas~\cite{MR2201385}, classified the generalised quadrangles admitting such a group in the case where the primitive action on points is of {\em affine} type. (There are only two examples, arising from {\em hyperovals} in $\PG(2,4)$ and $\PG(2,16)$.) 
In this case, the regular subgroup $M$ of $G$ is Abelian, and hence {\em left} multiplication by any element of $M$ is also a collineation. 
We consider the situation where $M$ is non-Abelian but $G$ has a second minimal normal subgroup, which is necessarily the centraliser of $M$, so that all left multiplications are again collineations. 
In the context of the O'Nan--Scott Theorem~\cite[Section 5]{MR1477745} for primitive permutation groups, this means that the action of $G$ on points is of either holomorph simple (HS) or holomorph compound (HC) type (see Section~\ref{sec:results} for definitions). 
We prove the following result.

\begin{Thm} \label{mainGQthm}
Let $G$ be a collineation group of a finite thick generalised quadrangle with point set $\mathcal{P}$ and line set $\mathcal{L}$. 
If $G$ acts transitively on $\mathcal{L}$ and primitively on $\mathcal{P}$, then $G$ has a unique minimal normal subgroup; that is, the action of $G$ on $\mathcal{P}$ does not have O'Nan--Scott type HS or HC.
\end{Thm}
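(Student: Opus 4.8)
The plan is to exploit the fact that in the HS and HC cases \emph{both} minimal normal subgroups act regularly on points, which forces the collinearity graph to be a Cayley graph with a conjugation-invariant connection set, and then to show that a thick generalised quadrangle cannot have such a collinearity graph.

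Suppose, for a contradiction, that the action of $G$ on $\mathcal{P}$ has type HS or HC. Then $G$ has exactly two minimal normal subgroups, $M$ and $C=C_G(M)$, with $M\cong C\cong T^k$ for a non-Abelian simple group $T$ and some $k\ge 1$ (with $k=1$ precisely in type HS), and each of $M$ and $C$ acts regularly on $\mathcal{P}$. Identify $\mathcal{P}$ with $M$ so that $M$ acts by right multiplication; then $C$ acts by left multiplication, the stabiliser $G_1$ of the identity point satisfies $\Inn(M)\le G_1\le\Aut(M)$, and conjugation by any element of $M$ is a collineation fixing $1$. Let $S\subseteq M\setminus\{1\}$ be the set of points collinear with $1$, so $|S|=s(t+1)$ and $S=S^{-1}$. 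Since both left and right multiplications by elements of $M$ are collineations, hence automorphisms of the collinearity graph, that graph is the Cayley graph $\mathrm{Cay}(M,S)$ and $S$ is invariant under conjugation; that is, $S$ is a union of non-trivial conjugacy classes of $T^k$. Moreover, the generalised quadrangle axiom forces the $t+1$ lines through $1$ to restrict $S$ to a partition $S=L_1'\sqcup\cdots\sqcup L_{t+1}'$ into cliques of size $s$ (each $L_i'$ being a line through $1$ with the point $1$ deleted), two elements of $S$ are adjacent in $\mathrm{Cay}(M,S)$ if and only if they lie in a common part, and $\Inn(M)\cong T^k$ permutes the parts.

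The next ingredient is the spectral rigidity of $S$. The collinearity graph of a thick generalised quadrangle of order $(s,t)$ is strongly regular with parameters $\big((s+1)(st+1),\,s(t+1),\,s-1,\,t+1\big)$ and has exactly the three eigenvalues $s(t+1)$, $s-1$ and $-t-1$. Since $S$ is a normal subset of $T^k$, the eigenvalues of $\mathrm{Cay}(T^k,S)$ are the algebraic integers $\chi(1)^{-1}\sum_{g\in S}\chi(g)$ as $\chi$ ranges over $\mathrm{Irr}(T^k)$, each occurring with multiplicity $\chi(1)^2$; hence $\chi(1)^{-1}\sum_{g\in S}\chi(g)\in\{s-1,\,-t-1\}$ for every non-trivial $\chi$. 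Applying this to characters inflated from a single direct factor controls the projection of $S$ onto that factor, and comparing the two eigenvalue multiplicities, together with $|M|=|T|^k=(s+1)(st+1)$, should tightly constrain $|T|$, $k$ and $(s,t)$. I also expect to use the elementary counting identity $(s+1)\,|G_1|=(t+1)\,|G_\ell|$ coming from line-transitivity, together with $\Inn(T^k)\le G_1\le\Aut(T^k)$ and the smallness of $|\Out(T^k)|$, and Higman's inequalities $s\le t^2$ and $t\le s^2$, to reduce to a bounded set of parameter pairs.

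The crux, and the step I expect to be hardest, is to derive a contradiction from the simultaneous existence of the clique-partition of $S$ and its normal-subset structure. The key leverage is that a conjugacy class $x^{T^k}$ contained in $S$ must meet each of the lines through $1$ in a common $\Inn(M)$-orbit in exactly the same number of points (conjugation carrying one such line onto another), so the class sizes available in $T^k$ --- which are products of conjugacy class sizes of $T$, and hence highly structured --- are forced to be laid out across the parts $L_i'$ in a very rigid way that is incompatible with each $L_i'$ having size exactly $s$ and with $|S|=s(t+1)$; in the few remaining parameter cases one checks directly that $(s+1)(st+1)$ is not of the form $|T|^k$ for any non-Abelian simple $T$. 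This contradiction shows that the type cannot be HS or HC, and hence $G$ has a unique minimal normal subgroup, completing the proof.
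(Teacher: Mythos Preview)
Your spectral approach is genuinely different from the paper's, but as written it has a real gap at exactly the step you label the ``crux''. You assert that the conjugacy-class structure of $S$ is ``incompatible'' with its partition into $t+1$ cliques of size $s$, but you give no argument for this, and I do not see how to extract a contradiction from the character constraints $\chi(S)/\chi(1)\in\{s-1,-t-1\}$ alone: those constraints encode precisely the strongly regular graph parameters, which any thick generalised quadrangle with a point-regular group having normal connection set would satisfy, so at that point you have not yet used the HS/HC hypothesis in a way that bites. Your counting identity $(s+1)|G_1|=(t+1)|G_\ell|$ likewise gives only $t+1\mid (s+1)|\Aut(M)|$, which is weaker by a factor of roughly $s$ than what is needed to bound $|T|$ in terms of $|\Out(T)|$.

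The paper's route avoids this by proving a much sharper structural fact you never invoke: under the HS/HC hypothesis each line through $1$ is a \emph{subgroup} of $M$ (Lemma~\ref{lineUnionCosets}(ii)), not merely a clique in a normal connection set. This yields flag-transitivity (Theorem~\ref{prop:flagTrans}), hence $t+1\mid|\Aut(M)|$ directly, and a double-coset count gives the key divisibility $s+1\mid t-1$ (Corollary~\ref{GQcor}). Together with Higman's inequality these force $|T|\le 2^{12}|\Out(T)|^4$ (or $|T|\le 2^{8}|\Out(T)|^4$ when $k=2$), after which a CFSG case-check finishes HS and HC with $k=2$; HC with $k\ge3$ is handled by a separate subgroup-theoretic argument showing $\ell$ embeds in a single simple factor. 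Your plan lacks any analogue of the subgroup lemma, and without it the ``rigidity'' you appeal to is not established.
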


The proof of Theorem~\ref{mainGQthm} is given in Sections~\ref{sec:proof} and~\ref{sec:proofHC}, using some preliminary results established in Section~\ref{sec:results}, and the Classification of Finite Simple Groups.

\section{Preliminaries} \label{sec:results}

We first recall some definitions and facts about permutation groups. 
Let $G$ be a group acting on a set $\Omega$, and denote the image of $x\in \Omega$ under $g\in G$ by $x^g$. 
The {\em orbit} of $x\in \Omega$ under $G$ is the set $x^G = \{ x^g \mid g\in G \}$, the subgroup $G_x = \{ g\in G \mid x^g=x \}$ is the {\em stabiliser} of $x\in \Omega$, and the {\em Orbit--Stabiliser Theorem} says that $|G:G_x| = |x^G|$. 
The action of $G$ is {\em transitive} if $x^G=\Omega$ for some (and hence every) $x\in\Omega$, and {\em semiregular} if $G_x$ is trivial for all $x\in \Omega$. 
It is {\em regular} if it is both transitive and semiregular. 
If $G$ acts transitively on $\Omega$ and $M$ is normal subgroup of $G$, then all orbits of $M$ on $\Omega$ have the same length, and in particular it makes sense to speak of $M$ being semiregular.

Given $g \in G$, define $\rho_g$, $\lambda_g,\iota_g \in \text{Sym}(\Omega)$ by $\rho_g : x \mapsto xg$, $\lambda_g : x \mapsto g^{-1}x$, and $\iota_g : x \mapsto g^{-1}xg$. 
Set $G_R = \{ \rho_g : g \in G \}$, $G_L = \{ \lambda_g : g \in G \}$, and $\Inn(G) = \{ \iota_g : g \in G \}$. 
The {\em holomorph} $\Hol(G)$ of $G$ is the semidirect product $G_R \rtimes \Aut(G)$ with respect to the natural action of $\Aut(G)$ on $G_R$ \cite[Section~2.6]{JohnThesis}. 
We have $\Hol(G) = N_{\text{Sym}(G)}(G_R)$, and $G_L = C_{\text{Sym}(G)}(G_R)$. 
A group $H$ acting on a set $\Delta$ is {\em permutationally isomorphic} to $G$ acting on $\Omega$ if there is an isomorphism $\theta : G \rightarrow H$ and a bijection $\beta : \Omega \rightarrow \Delta$ such that $\beta(\omega^g) = \beta(\omega)^{\theta(g)}$ for all $g\in G$ and $\omega \in \Omega$. 
If a group $M$ acts regularly on $\Omega$, then there is a permutational isomorphism $\theta : N_{\text{Sym}(\Omega)}(M) \rightarrow \Hol(M)$ with bijection $\beta : \Omega \rightarrow M$, where $\beta : \alpha^g \mapsto g$ for some fixed $\alpha \in \Omega$, and $\theta : \tau \mapsto \beta^{-1} \tau \beta$. 
We have $\theta(M) = M_R$, so the regular action of $M$ on $\Omega$ is permutationally isomorphic to the action of $M$ on itself by right multiplication, and hence we can identify $\Omega$ with $M$. 
Furthermore, $\theta(C_{\text{Sym}(\Omega)}(M)) = M_L$. 
If $M$ is a normal subgroup of $G$, then $G$ is permutationally isomorphic to a subgroup of $\Hol(M)$. 
If $M \rtimes \Inn(M) \le G$, then $G$ contains $M_L$ because $M \rtimes \Inn(M) = \langle M_R,M_L \rangle$.

A transitive action of $G$ on $\Omega$ is said to be {\em primitive} if it preserves no non-trivial partition of $\Omega$. 
The structure of a primitive permutation group is described by the O'Nan--Scott Theorem~\cite[Section 5]{MR1477745}, which splits the primitive permutation groups into eight types. 
We are concerned with only two of these types. 
If $M \rtimes \Inn(M) \le G \le M \rtimes \Aut(M)$ with $M \cong T$ for some non-Abelian finite simple group $T$, then $G$, being contained in the holomorph of a simple group, is said to have type {\em HS}. 
If instead $M$ is isomorphic to a compound group $T^k$, $k\ge 2$, then $G$ has type {\em HC}. 
In this case, $G$ induces a subgroup of $\text{Aut}(M)\cong \text{Aut}(T) \wr S_k$ which acts transitively on the set of $k$ simple direct factors of $M\cong T^k$. 
In either case, $G$ contains $M_R$ and $M_L$, as explained above. 

If we write $\mathcal{S}=(\mathcal{P},\mathcal{L},\inc)$ for a partial linear space, then we mean that $\mathcal{P}$ is the point set, $\mathcal{L}$ is the line set, and $\inc$ is the incidence relation. 
An incident point--line pair is called a {\em flag}. 
A {\em collineation} of $\mathcal{S}$ is a permutation of $\mathcal{P}$, together with a permutation of $\mathcal{L}$, such that incidence is preserved. 
If $\mathcal{S}$ admits a group of collineations $M$ that acts regularly on $\mathcal{P}$, then we identity $\mathcal{P}$ with $M$ acting on itself by right multiplication (as above). 
A line $\ell$ is then identified with the subset of $M$ comprising all of the points incident with $\ell$, and hence $P \inc \ell$ if and only if $P \in \ell$. 
Moreover, the stabiliser $M_\ell$ is the set of all elements of $M$ that fix $\ell$ setwise by right multiplication. 

\begin{La} \label{lineUnionCosets}
Let $\mathcal{S}=(\mathcal{P},\mathcal{L},\inc)$ be a partial linear space with no triangles, and let $G$ be a group of collineations of $\mathcal{S}$ with a normal subgroup $M$ that acts regularly on $\mathcal{P}$. 
Let $\ell$ be a line incident with the identity $1 \in M = \mathcal{P}$, and suppose that its stabiliser $M_\ell$ is non-trivial. 
Then
\begin{itemize}
\item[\textnormal{(i)}] $\ell$ is a union of left $M_\ell$-cosets, including the trivial coset;
\item[\textnormal{(ii)}] if $M \rtimes \Inn(M) \le G$, then $M_\ell=\ell$.
\end{itemize}
\end{La}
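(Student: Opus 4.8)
The plan is to set up the identification $\mathcal{P} = M$ as in the paragraph preceding the lemma, so that lines through $1$ are subsets of $M$ containing $1$, and the point-stabiliser $M_\ell$ acts on $\ell$ by \emph{right} multiplication. For part (i), the key observation is that $M_\ell$ is a subgroup of $M$ and that, since $M$ is normal in $G$ and $\ell$ is a line through $1$, right multiplication by any $m \in M_\ell$ sends $\ell$ to $\ell$; in particular, for a point $P \in \ell$ we get $Pm \in \ell$, i.e. the \emph{right} coset $P M_\ell$ is contained in $\ell$. To obtain \emph{left} cosets instead, I would exploit the no-triangles hypothesis: if $P, Q \in \ell$ are distinct points, then the line through $P$ is unique, and one uses the collineation property together with normality of $M$ to transfer the coset structure. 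Concretely, for $g \in M$ the map $\rho_g$ is a collineation permuting the lines; if $\ell$ is a line through $1$ and $P \in \ell$, consider the line $\ell^{\rho_P^{-1}} = \ell P^{-1}$, which passes through $1$ and through $P^{-1} \cdot P = 1$—wait, more carefully, one shows $1 \in \ell P^{-1}$ and $P^{-1} \in \ell P^{-1}$, and since both $\ell$ and $\ell P^{-1}$ pass through $1$ while meeting in more than one point would force a triangle unless they coincide, we deduce relations among the points of $\ell$. Iterating this argument for each $P \in \ell$ and using that $M_\ell$ fixes $\ell$ setwise yields that $\ell$ is a union of left $M_\ell$-cosets, one of which is $M_\ell$ itself (the trivial coset), since $1 \in \ell$.

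For part (ii), assume additionally that $M \rtimes \Inn(M) \le G$, so that by the final sentence of the preliminaries $G$ contains $M_L$, i.e. all left multiplications $\lambda_g : x \mapsto g^{-1}x$ are collineations. The strategy is to show $\ell \subseteq M_\ell$, which combined with $M_\ell \subseteq \ell$ from (i) (the trivial coset) gives equality. Take $P \in \ell$ with $P \neq 1$; I want $P \in M_\ell$, i.e. $\ell P = \ell$. Apply the collineation $\lambda_{P^{-1}} : x \mapsto Px$: it fixes $1$ if and only if $P = 1$, so instead consider $\lambda_P : x \mapsto P^{-1}x$, which maps $P \mapsto 1$ and maps $\ell$ to the line $P^{-1}\ell$ through $1$. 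Now both $\ell$ and $P^{-1}\ell$ are lines through $1$; if they shared a second common point we would get a triangle (or they coincide). The no-triangle condition forces $P^{-1}\ell = \ell$ whenever $|\ell \cap P^{-1}\ell| \geq 2$, and since $1$ lies in both and $P \in \ell$ implies $1 = P^{-1}P \in P^{-1}\ell$, one needs a second shared point: here the left-coset structure from (i) supplies it, because $P \in \ell$ together with $1 \in M_\ell \subseteq \ell$ gives that the left coset $P M_\ell \subseteq \ell$, and applying $\lambda_P$ shows $M_\ell \subseteq P^{-1}\ell \cap \ell$. Since $|M_\ell| \geq 2$ by hypothesis, we conclude $P^{-1}\ell = \ell$, hence $P^{-1} \in M_\ell$ and therefore $P \in M_\ell$. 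As $P$ was arbitrary, $\ell \subseteq M_\ell$.

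The main obstacle I anticipate is getting part (i) exactly right: the naive argument only delivers \emph{right} $M_\ell$-cosets, and converting to left cosets genuinely requires the no-triangles hypothesis via a uniqueness-of-lines argument, so care is needed to chase which collineation ($\rho$ or $\lambda$) is available at each step—in part (i) only the $\rho_g$ (from normality of $M$ in $G$) are guaranteed, whereas part (ii) additionally invokes the $\lambda_g$. A secondary point to handle cleanly is the role of the hypothesis $M_\ell \neq 1$: it is used both to ensure the coset statement in (i) is non-vacuous and, crucially in (ii), to guarantee that $\ell$ and its translate $P^{-1}\ell$ meet in at least two points so that the no-triangle condition can be applied to force them equal.
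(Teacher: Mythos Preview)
Part (i): You have a terminological slip that sends you on an unnecessary detour. The set $PM_\ell = \{Pm : m \in M_\ell\}$ is a \emph{left} coset of $M_\ell$, not a right coset. Your opening observation---that for $P \in \ell$ and $m \in M_\ell$ the action gives $Pm = P^{\rho_m} \in \ell$, so $PM_\ell \subseteq \ell$---is already the entire proof of (i); taking $P=1$ yields the trivial coset $M_\ell \subseteq \ell$. This is exactly the paper's argument, and it does not use the no-triangle hypothesis at all. Everything in your outline after ``To obtain left cosets instead'' is unneeded.

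Part (ii): Here there is a genuine gap. Your argument correctly shows $M_\ell \subseteq \ell \cap P^{-1}\ell$ (using (i) for $PM_\ell \subseteq \ell$ and then applying $\lambda_P$), and since $|M_\ell|\ge 2$ the partial-linear-space axiom forces $\ell = P^{-1}\ell$. (What you actually invoke here is that two distinct lines meet in at most one point, not the absence of triangles.) But the conclusion ``hence $P^{-1}\in M_\ell$'' does not follow: $P^{-1}\ell=\ell$ says the \emph{left} translation $\lambda_{P^{-1}}$ fixes $\ell$, whereas $P^{-1}\in M_\ell$ requires the \emph{right} translation $\rho_{P^{-1}}$ to fix $\ell$, and these differ for non-abelian $M$. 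The repair is short: you have shown $P\ell=\ell$ for every $P\in\ell$, so $\ell$ is a finite subset of $M$ containing $1$ and closed under multiplication, hence a subgroup; then $\ell P=\ell$ for all $P\in\ell$, giving $\ell\subseteq M_\ell$.

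For comparison, the paper proves (ii) by a direct no-triangle argument on the right action: for $m\in\ell\setminus\{1\}$ and non-trivial $h\in M_\ell$, one checks that $h^{-1}m$ is collinear with both $h^{-1}$ and $m$ (via $\lambda_{h^{-1}}$ and $\rho_m$ respectively), so $h^{-1}m\in\ell$ since $\mathcal{S}$ has no triangles; then $\rho_m$ takes the two points $1,h^{-1}\in\ell$ to $m,h^{-1}m\in\ell$, forcing $\ell^{\rho_m}=\ell$. Your route, once patched, has the mild advantage of using only the partial-linear-space axiom rather than the no-triangle hypothesis.
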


\begin{Prf}
(i)
Let $g\in M_\ell$. Since $1\inc\ell$, namely $1\in \ell$, we have $g=1^g\inc \ell^g=\ell$, namely $g\in \ell$.
Therefore, $M_\ell \subseteq \ell$. 
Now, if $h \not \in M_\ell \backslash \{1\}$ is incident with $\ell$, then every non-trivial element of $M_\ell$ must map $h$ to another point incident with $\ell$, and hence the whole coset $hM_\ell$ is contained in $\ell$. 

(ii)
By (i), $M_\ell \subseteq \ell$, so it remains to show the reverse inclusion.
Let $m \in \ell \setminus \{1\}$. 
Since $M_\ell$ is non-trivial, there exists a non-trivial element $h\in M_\ell$. 
Since $M \rtimes \Inn(M) \le G$, left multiplication by $h^{-1}$ is a collineation of $\mathcal{S}$. 
Since $1$ and $m$ are both incident with $\ell$, it follows that $h^{-1}$ and $h^{-1}m $ are collinear. 
On the other hand, $h^{-1}\in M_\ell \subseteq \ell$ by (i), so $h^{-1}m$ is collinear with $m$ because right multiplication by $m$ is a collineation. 
That is, $h^{-1}m$ is collinear with two points $h^{-1},m$ that are incident with $\ell$, and so $h^{-1}m$ is itself incident with $\ell$ because $\mathcal{S}$ contains no triangles. 
Therefore, $m$ maps two points $1,h^{-1}$ incident with $\ell$ to two points $m,h^{-1}m$ incident with $\ell$, and so $m \in M_\ell$. 
\end{Prf}

\begin{Thm} \label{prop:flagTrans}
Let $\mathcal{S} = (\mathcal{P},\mathcal{L},\inc)$ be a partial linear space with no triangles. 
Let $G$ be a group of collineations of $\mathcal{S}$ that acts transitively on $\mathcal{L}$, and suppose that $G$ has a normal subgroup $M$ that acts regularly on $\mathcal{P}$ and satisfies $M \rtimes \Inn(M) \le G \le M \rtimes \Aut(M)$. 
If the action of $M$ on $\mathcal{L}$ is not semiregular, then the lines $\ell_1,\ldots,\ell_{t+1}$ incident with $1$ are a $G_1$-conjugacy class of subgroups of $M$, and $G$ acts transitively on the flags of $\mathcal{S}$.
\end{Thm}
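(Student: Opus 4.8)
My plan is to first strengthen the non-semiregularity hypothesis into a statement about all line stabilisers, then realise each line through $1$ as a subgroup of $M$ using Lemma~\ref{lineUnionCosets}, and finally exploit that subgroup structure to obtain transitivity. First I would observe that, since $M \trianglelefteq G$ and $G$ is transitive on $\mathcal{L}$, all $M$-orbits on $\mathcal{L}$ have a common length, which is less than $|M|$ because $M$ is not semiregular on $\mathcal{L}$; hence $M_\ell \ne 1$ for \emph{every} line $\ell$, in particular for each of the $t+1$ lines $\ell_1,\dots,\ell_{t+1}$ through $1$ (the number of lines through a point is a constant $t+1$ because $M$ is transitive on $\mathcal{P}$). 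Applying Lemma~\ref{lineUnionCosets}(ii) to each $\ell_i$ then gives $\ell_i = M_{\ell_i}$, a subgroup of $M$. I would also record that, since $G \le M \rtimes \Aut(M) = \Hol(M)$ acting on $\mathcal{P} = M$, and the stabiliser of $1$ in $\Hol(M)$ is $\Aut(M)$ acting naturally, we have $G_1 \le \Aut(M)$; thus $G_1$ permutes the subgroups $\ell_1,\dots,\ell_{t+1}$, and since $\Inn(M) \le G_1$ each $G_1$-orbit on them is a union of $M$-conjugacy classes of subgroups, which is why ``$G_1$-conjugacy class'' is the right notion here.

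The main step — and the one non-routine point — is showing that $G_1$ acts transitively on $\{\ell_1,\dots,\ell_{t+1}\}$. Given $\ell_i$ and $\ell_j$, transitivity of $G$ on $\mathcal{L}$ yields $g \in G$ with $\ell_i^{\,g} = \ell_j$; this $g$ need not fix $1$, but $m := 1^g$ lies in $\ell_i^{\,g} = \ell_j$, and since $\ell_j$ is a subgroup equal to its own $M$-stabiliser, the element $m^{-1} \in M \le G$ fixes $\ell_j$ setwise (acting by right multiplication) and sends $m$ to $1$. Hence $g m^{-1} \in G_1$ and still maps $\ell_i$ to $\ell_j$. Therefore $\{\ell_1,\dots,\ell_{t+1}\}$ is a single $G_1$-orbit, i.e.\ a $G_1$-conjugacy class of subgroups of $M$.

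For flag-transitivity I would take arbitrary flags $(P,\ell)$ and $(P',\ell')$, use the regular action of $M$ to choose collineations carrying $P$ and $P'$ to $1$ (and hence $\ell,\ell'$ to lines through $1$, since collineations preserve incidence), and then apply the transitivity of $G_1$ on the lines through $1$ just established; composing these three collineations gives an element of $G$ mapping the first flag to the second. The engine behind all of this is the single observation that a line through $1$ is a subgroup, hence stable under right translation by its own elements — exactly what allows an arbitrary line-mapping element of $G$ to be ``corrected'' into one that fixes the base point.
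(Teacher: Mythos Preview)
Your proof is correct. The overall structure matches the paper's --- identify the lines through $1$ with subgroups of $M$ via Lemma~\ref{lineUnionCosets}(ii), note that $G_1 \le \Aut(M)$, and then show $G_1$ is transitive on $\{\ell_1,\dots,\ell_{t+1}\}$ --- but your execution of the transitivity step is different and somewhat cleaner. The paper introduces the homomorphism $\varphi : G \to \Aut(M)$, $g \mapsto \iota_g$, identifies $\ker\varphi$ with $M_L$, and uses the factorisation $G = M_L\,G_1$ to replace an arbitrary $g$ with $\ell_1^{\,g} = \ell_i$ by an element of $G_1$; the verification that the $M_L$-part can be dropped (i.e.\ that $\ell_1^{\,z} = \ell_1$) is left implicit and relies on exactly the subgroup property you isolate. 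You instead work directly with $M_R$: since $m := 1^g \in \ell_j = M_{\ell_j}$, right multiplication by $m^{-1}$ fixes $\ell_j$ and sends $m$ to $1$, so $gm^{-1} \in G_1$ still maps $\ell_i$ to $\ell_j$. This avoids the detour through $\varphi$ and $M_L$ and makes the role of the identity $\ell_j = M_{\ell_j}$ completely transparent, at no cost in generality.
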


\begin{Prf} 
Since $M$ acts transitively on $\mathcal{P}$, we have $G = M G_1 = G_1 M$. 
By assumption, $G \le \Hol(M)$ and so $G_1 \le \Aut(M)$. 
By Lemma~\ref{lineUnionCosets}(ii), the lines $\ell_1,\ldots,\ell_{t+1}$ can be identified with subgroups of $M$.
Each $g \in G_1$, acting naturally as an element of $\Aut(M)$, fixes $1$ and hence maps $\ell_1$ to $\ell_1^g = \ell_i$ for some $i \in \{1,\ldots,t+1\}$. 
Conversely, consider the map $\varphi : G \rightarrow \text{Aut}(M)$ defined by $\varphi(g) = \iota_g$. 
The restriction of $\varphi$ to $G_1$ is the identity. 
Moreover, $\operatorname{ker}(\varphi) = C_G(M)$, and hence $\theta(\operatorname{ker}(\varphi)) = M_L$, where $\theta$ is the permutational isomorphism defined above. 
In particular, $\operatorname{ker}(\varphi)$ acts transitively (indeed, regularly) on $\mathcal{P}$.
Hence, $\operatorname{ker}(\varphi) G_1 = G$, so $\operatorname{Im}(\varphi) = \varphi(G_1) = G_1$. 
Now consider a line $\ell_i$ for some $i>1$. 
By line-transitivity, $\ell_i=\ell_1^g$ for some $g\in G$. 
On the other hand, since $G = \operatorname{ker}(\varphi) G_1$, we have $g=zg_1$ for some $z\in\operatorname{ker}(\varphi)$ and $g_1\in G_1$, so $\ell_1^g = \ell_1^{g_1}$. 
Therefore, $\ell_1,\ldots,\ell_{t+1}$ are precisely the subgroups of the form $\ell_1^g$ with $g\in G_1$. 
Since the lines $\ell_i$ and $\ell_j$ intersect precisely in the point $1$ for $i\neq j$, the $t+1$ subgroups $\ell_1,\ldots,\ell_{t+1}$ are distinct, and they form a single $G_1$-conjugacy class of subgroups of $M$. 
In particular, $G_1$ acts transitively on $\{\ell_1,\ldots,\ell_{t+1}\}$, so $G$ acts transitively on the flags of $\mathcal{S}$.
\end{Prf}

Let us draw a corollary in the case where $\mathcal{S}$ is a thick generalised quadrangle. 
In this case, $\mathcal{S}$ has $(s+1)(st+1)$ points and $(t+1)(st+1)$ lines, where $(s,t)$ is the order of $\mathcal{S}$.

\begin{Cor} \label{GQcor}
If the partial linear space in Theorem~$\ref{prop:flagTrans}$ is a thick generalised quadrangle of order $(s,t)$, then $s+1$ divides $t-1$.
\end{Cor}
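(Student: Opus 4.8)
The plan is to exploit that the lines through the identity are subgroups, together with the fact that both left and right multiplications by elements of $M$ are collineations. First I would record the consequences of Theorem~\ref{prop:flagTrans}: the $t+1$ lines through $1$ are subgroups $\ell_1,\dots,\ell_{t+1}$ of $M$, each of order $s+1$ (since a line carries $s+1$ points), any two of which meet only in $\{1\}$, and which are permuted among themselves by $G_1$; as $\Inn(M)\le G_1$, this means every $M$-conjugate $x^{-1}\ell_i x=\iota_x(\ell_i)$ of an $\ell_i$ is again one of the $\ell_j$. I would then reduce the claim: since $st+1=(s+1)t-(t-1)$, we have $(s+1)\mid(t-1)$ if and only if $(s+1)\mid(st+1)$, and $st+1=|M|/|\ell_1|=|M:\ell_1|$; so it suffices to show that $|\ell_1|$ divides $|M:\ell_1|$.

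The main step is a semiregularity argument. For each $j$ the right cosets $\ell_j x=\rho_x(\ell_j)$ are lines (images of $\ell_j$ under the collineations $\rho_x$), pairwise disjoint, and $st+1$ in number, forming a spread $\Sigma_j$. I would let $\ell_i$ act on $\Sigma_j$ by right multiplication; the stabiliser in $\ell_i$ of a line $\ell_j x$ is $\ell_i\cap x^{-1}\ell_j x$, and since $x^{-1}\ell_j x$ is a line through $1$ — hence some $\ell_k$ — this stabiliser is $\ell_i$ if $\ell_j^x=\ell_i$ and trivial otherwise. Therefore, as soon as $\ell_i$ and $\ell_j$ are not $M$-conjugate (for some choice of $i\ne j$), $\ell_i$ acts semiregularly on $\Sigma_j$, so $s+1=|\ell_i|$ divides $|\Sigma_j|=st+1$ and we are done. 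This settles everything except the case in which $\ell_1,\dots,\ell_{t+1}$ form a single $M$-conjugacy class.

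In that remaining case $|M:N_M(\ell_1)|=t+1$; since $\ell_1\le N_M(\ell_1)$ this gives $(t+1)\mid(st+1)$, equivalently $(t+1)\mid(s-1)$. Applying the same action with $i=j=1$, $\ell_1$ fixes exactly the $|N_M(\ell_1):\ell_1|$ lines $\ell_1 x$ with $x\in N_M(\ell_1)$ and is semiregular on the remaining lines of $\Sigma_1$; feeding in $(t+1)\mid(s-1)$, a short arithmetic manipulation forces $(s,t)$ into the single family $(t^2-t-1,t)$ with $t\ge 3$. The hard part will be to exclude this configuration — i.e.\ to show that no thick generalised quadrangle of order $(t^2-t-1,t)$ admits a collineation group of the holomorph type of Theorem~\ref{prop:flagTrans} in which all $t+1$ subgroups through a point are $M$-conjugate. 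I would attack it either through a closer analysis of the subgroup $N_M(\ell_1)$ and its action on the lines through $1$, or by invoking known divisibility constraints and non-existence results for generalised quadrangles (in particular those restricting point-regular collineation groups when $s$ is large relative to $t$), or by eliminating directly the handful of small parameter sets that survive.
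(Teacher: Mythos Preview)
Your opening reduction and the first case are essentially the paper's argument in different clothing: where the paper counts double cosets $\ell_1 g \ell_2$ and observes that each has size $(s+1)^2$ when $\ell_1$ and $\ell_2$ are not $\Inn(M)$-conjugate, you let $\ell_i$ act on the right-coset spread $\Sigma_j$ and note that it acts semiregularly.  These are the same computation, and both give $(s+1)\mid(st+1)$ immediately.

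In the remaining case (all $\ell_i$ conjugate under $\Inn(M)$) you correctly recover $(t+1)\mid(s-1)$, and your extra orbit count on $\Sigma_1$ is also correct: writing $s-1=m(t+1)$, the condition $(s+1)\mid t(mt+1)$ does force $m=t-2$, i.e.\ $s=t^2-t-1$.  But here your proposal stops, and this is a genuine gap.  The family $(t^2-t-1,t)$ is infinite, so ``eliminating directly the handful of small parameter sets'' is not an option; and the standard divisibility constraints (Higman, $(s+t)\mid st(s+1)(t+1)$, etc.) are all satisfied by these parameters, so ``known divisibility constraints'' will not help either.  Your first suggestion --- further analysis of $N_M(\ell_1)$ --- is not obviously productive and you give no indication of what to extract from it.

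What you are missing is the paper's key construction.  Once $(t+1)\mid(s-1)$ gives $s>t$, one builds a subquadrangle of order $(s,1)$ directly from two of the subgroups: take point set $\mathcal{P}'=\ell_1\ell_2$ and line set $\mathcal{L}'=\{g_1\ell_2:g_1\in\ell_1\}\cup\{\ell_1g_2:g_2\in\ell_2\}$.  Each element of $\mathcal{L}'$ is a genuine line of $\mathcal{Q}$ (a right or left translate of a line through $1$, using that both right and left multiplications are collineations), and one checks that every point of $\mathcal{P}'$ lies on exactly two lines of $\mathcal{L}'$.  This is a grid --- a subquadrangle of order $(s,1)$ --- and by \cite[2.2.2(i)]{MR2508121} a thick generalised quadrangle of order $(s,t)$ with $s>t$ admits no such subquadrangle.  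That contradiction finishes the proof without ever needing to isolate the particular family $(t^2-t-1,t)$.
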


\begin{Prf}
Begin by observing that $\Inn(M)$ acts on $\{ \ell_1,\ldots,\ell_{t+1} \}$. 
That is, for each $g \in M$, we have $g^{-1} \ell_1 g = \ell_i$ for some $i \in \{1,\ldots,t+1\}$. 
Suppose first that $\Inn(M)$ is intransitive on $\{ \ell_1,\ldots,\ell_{t+1} \}$. 
Then, without loss of generality, $\ell_2$ is in a different $\Inn(M)$-orbit to $\ell_1$, and so, for every $g \in M$, we have $g^{-1} \ell_1 g = \ell_i$ for some $i \neq 2$.  
Hence, every double coset $\ell_1 g \ell_2$, where $g \in M$, has size $|\ell_1 g \ell_2| = |g^{-1} \ell_1 g \ell_2| = |\ell_i \ell_2| = (s+1)^2$. 
Here the final equality holds because $|\ell_i \cap \ell_2| = 1$ (because distinct concurrent lines intersect in a unique point, in this case the point $1$). 
Since the double cosets of $\ell_1$ and $\ell_2$ partition $M$, it follows that $(s+1)^2$ divides $|M|=|\mathcal{P}|=(s+1)(st+1)$. 
Therefore, $s+1$ divides $st+1=(s+1)t - (t-1)$, and hence $s+1$ divides $t-1$, as claimed. 

Now suppose, towards a contradiction, that $\Inn(M)$ is transitive on $\{ \ell_1,\ldots,\ell_{t+1} \}$. 
Consider two lines incident with $1$, say $\ell_1,\ell_2$. 
Then a double coset $\ell_1 g \ell_2$, where $g \in M$, has size $(s+1)^2$ or $s+1$ according as $g^{-1}\ell_1 g \neq \ell_2$ or $g^{-1}\ell_1 g = \ell_2$. 
There are exactly $|M|/(t+1)$ elements $g\in M$ for which $g^{-1}\ell_1 g = \ell_2$, and since $\ell_1 h \ell_2 = \ell_1 g \ell_2$ if and only if $h \in \ell_1 g \ell_2$ (where $h \in M$), it follows that there are precisely $|M|/((s+1)(t+1))$ double cosets of size $s+1$. 
Therefore, $(s+1)(t+1)$ must divide $|M|=|\mathcal{P}|=(s+1)(st+1)$, and so $t+1$ must divide $st+1=(t+1)s - (s-1)$ and hence $s-1$. 
In particular, we have $s \ge t+2 > t$, and so \cite[2.2.2(i)]{MR2508121} implies that $\mathcal{S}$ cannot contain a subquadrangle of order $(s,1)$.
For a contradiction, we now construct such a subquadrangle. 

Consider the subset $\mathcal{P}'=\ell_1\ell_2$ of $\mathcal{P}=M$, let $\mathcal{L}' = \{ g_1\ell_2 \mid g_1 \in \ell_1\} \cup \{ \ell_1g_2 \mid g_2 \in \ell_2 \}$, and let $\inc'$ be the restriction of $\inc$ to $(\mathcal{P}' \times \mathcal{L}') \cup (\mathcal{L}' \times \mathcal{P}')$. 
We claim that $\mathcal{S}' = (\mathcal{P}',\mathcal{L}',\inc')$ is a subquadrangle of $\mathcal{S}$ of order $(s,1)$. 
First observe that, for each $\ell \in \mathcal{L}'$ and each $P \in \mathcal{P}'$ not incident with $\ell$, the unique point incident with $\ell$ and collinear with $P$ lies in $\mathcal{P}'$, because $\ell \subset P'$. 
Hence, $\mathcal{S}'$ satisfies the generalised quadrangle axiom. 
Now, every line in $\mathcal{L}'$ is incident with $s+1$ points in $\mathcal{P}'$, being a coset of either $\ell_1$ or $\ell_2$, so it remains to show that every point in $\mathcal{P}'$ is incident with exactly two lines in $\mathcal{L}'$. 
Given $P=g_1g_2 \in \mathcal{P}'$, where $g_1 \in \ell_1$, $g_2 \in \ell_2$, each line $\ell \in \mathcal{L}'$ incident with $P$ is either of the form $h_1 \ell_2$ for some $h_1 \in \ell_1$ or $\ell_1 h_2$ for some $h_2 \in \ell_2$, and since $P \in \ell$, we must have $h_1=g_1$ or $h_2=g_2$, respectively. 
Therefore, $P$ is incident with exactly two lines in $\mathcal{L}'$, namely $g_1 \ell_2$ and $\ell_1 g_2$.
\end{Prf}

We also check that, in the case of a thick generalised quadrangle, the assumption that $M$ is not semiregular on $\mathcal{L}$ is satisifed when $|M|$ is even. 

\begin{La} \label{GQsemireg}
Let $\mathcal{Q} = (\mathcal{P},\mathcal{L},\inc)$ be a thick generalised quadrangle of order $(s,t)$. 
Let $G$ be a group of collineations of $\mathcal{Q}$ that acts transitively on $\mathcal{L}$, and suppose that $G$ has a normal subgroup $M$ that acts regularly on $\mathcal{P}$. 
If $M$ has even order, then $M$ does not act semiregularly on $\mathcal{L}$.
\end{La}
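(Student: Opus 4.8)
The plan is to argue by contradiction. Suppose that $M$ acts semiregularly on $\mathcal{L}$; I would then extract two incompatible divisibility conditions. For the first: under this hypothesis every $M$-orbit on $\mathcal{L}$ has length $|M|$, so, since these orbits partition $\mathcal{L}$, we get that $|M|=|\mathcal{P}|=(s+1)(st+1)$ divides $|\mathcal{L}|=(t+1)(st+1)$, and hence $s+1$ divides $t+1$.

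For the second condition I would use that $|M|$ is even to pick an involution $g\in M$. Identifying $\mathcal{P}$ with $M$ as in Section~\ref{sec:results}, the element $g$ acts as the collineation $\rho_g\colon x\mapsto xg$, which is an involution. It fixes no point (as $M$ acts regularly) and no line (as $M$ acts semiregularly on $\mathcal{L}$): in either case a fixed element $z$ would give $g\in M_z=1$. From the absence of fixed lines I would deduce that no point $P$ is collinear with its image $Pg$; otherwise $P\neq Pg$ by regularity, and the unique line $m$ through $P$ and $Pg$ would satisfy $Pg\in mg$ and $Pg^2=P\in mg$, forcing $mg=m$ and so $g\in M_m=1$, a contradiction. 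Now I would feed $\rho_g$ into Benson's counting lemma for collineations of a thick generalised quadrangle of order $(s,t)$: since the number of fixed points of $\rho_g$ and the number of points moved by $\rho_g$ to a distinct collinear point are both $0$, the lemma gives $0\equiv 1+st\pmod{s+t}$, that is, $s+t$ divides $st+1$.

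It remains to derive a contradiction from $s+1\mid t+1$ and $s+t\mid st+1$ (using $s,t\ge 2$). Writing $t+1=c(s+1)$ with $c\ge 1$, so that $t=cs+c-1$ and $s+t=(c+1)s+(c-1)$, a short computation gives $(c+1)(st+1)=cs(s+t)+\bigl((c-1)s+(c+1)\bigr)$; hence $s+t$ divides $E:=(c-1)s+(c+1)$, which is impossible since $0<E<s+t$ (indeed $(s+t)-E=2(s-1)>0$). I expect the main obstacle to be verifying the hypotheses of Benson's lemma — in particular the claim that $\rho_g$ moves no point to a collinear point, which is exactly where the involution, and hence the even-order assumption, is essential; the concluding arithmetic is then routine.
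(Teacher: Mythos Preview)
Your proof is correct. The paper follows the same overall strategy—deduce $s+1\mid t+1$ from semiregularity on lines, then use an involution in $M$ to reach a contradiction—but outsources the second half to two results from \cite{Bamberg:2012yf}: their Lemma~3.2 converts $s+1\mid t+1$ into $\gcd(s,t)>1$, and their Lemma~3.4 then forces every involutory collineation to fix a line once $\gcd(s,t)>1$. Under the hood that cited Lemma~3.4 is itself a Benson's-lemma argument, so the engine is the same as yours; the difference is that the paper's route passes through the intermediate fact $\gcd(s,t)>1$ (from which $s+t\nmid st+1$ is immediate, since any common prime divisor of $s$ and $t$ divides $s+t$ but not $st+1$), whereas you bypass this and show directly by an elementary computation that $s+1\mid t+1$ and $s+t\mid st+1$ are incompatible for $s\ge 2$. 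Your version is self-contained and makes the role of Benson's lemma explicit; the paper's is shorter by citation.
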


\begin{Prf}
If $M_\ell$ is trivial for $\ell \in \mathcal{L}$, then $|\ell^M|=|M|=|\mathcal{P}|=(s+1)(st+1)$ divides $|\mathcal{L}|=(t+1)(st+1)$, and hence $s+1$ divides $t+1$, so \cite[Lemma~3.2]{Bamberg:2012yf} implies that $\gcd(s,t)>1$. 
However, $|M|$ is even, so $M$ contains an element of order $2$, and because $\gcd(s,t)>1$, it follows from \cite[Lemma~3.4]{Bamberg:2012yf} that every such element must fix some line, contradicting the assumption that $M_\ell$ is trivial.
\end{Prf}

\section{Proof of Theorem~\ref{mainGQthm}: HS type} \label{sec:proof}

Suppose that $\mathcal{Q}=(\mathcal{P},\mathcal{L},\inc)$ is a thick generalised quadrangle with a collineation group $G$ that acts transitively on $\mathcal{L}$ and primitively of O'Nan--Scott type HS on $\mathcal{P}$. 
Then
\[
T \rtimes \Inn(T) \le G \le T \rtimes \Aut(T)
\]
for some non-Abelian finite simple group $T$, with $T$ acting regularly on $\mathcal{P}$. 
Since $|T|$ is even by the Feit--Thompson Theorem~\cite{MR0166261}, Lemma~\ref{GQsemireg} tells us that $\mathcal{Q}$ satisfies the hypotheses of Theorem~\ref{prop:flagTrans} and Corollary~\ref{GQcor}. 
In particular, $s+1$ divides $t-1$ (by Corollary~\ref{GQcor}), and we write
\begin{equation} \label{eq:t'}
t' := \frac{t-1}{s+1}.
\end{equation}
Since $T$ acts regularly on $\mathcal{P}$, we have $|T|=|\mathcal{P}|=(s+1)(st+1)$.
By Higman's inequality, $t \le s^2$, and hence $t' \le s-1$. 
Therefore,
\[
|T| = (s+1)^2(st'+1) \quad \text{for some } 1 \le t' \le s-1.
\]
By Theorem~\ref{prop:flagTrans}, $G_1\le \Aut(T)$ acts transitively on the $t+1$ lines incident with $1$, and hence $t+1$ divides $|\Aut(T)| = |T|\cdot |\Out(T)|$. 
Therefore, $|\Out(T)|$ is divisible by $(t+1)/\gcd(t+1,|T|)$, so $t+1 \le \gcd(t+1,|T|)|\Out(T)|$. 
Since $|T| = (s+1)(st+1)$ is even, $s$ must be odd; and since $s+1$ divides $t-1$, we have $\gcd(t+1,s+1)=2$. 
Moreover, $st'+1=t-t'$, so $\gcd(t+1,st'+1) = \gcd(t+1,t-t') = \gcd(t+1,t'+1)$, and in particular $\gcd(t+1,|T|) \le 2^2(t'+1)$. 
Therefore, $t+1 \le 4(t'+1)|\Out(T)|$. 
Together with \eqref{eq:t'}, this implies $t'(s+1) + 2 \le 4(t'+1)|\Out(T)|$, and because $t'\ge 1$, it follows that
\[
s \le 8|\Out(T)| - 3.
\]
Since $|T| \le (s+1)(s^3+1)$ (by Higman's inequality), we have
\[
|T| \le (8|\Out(T)| - 2)((8|\Out(T)| - 3)^3+1).
\]
The following lemma therefore completes the proof of Theorem~\ref{mainGQthm} in the HS case.

\begin{La}\label{HS_hammer}
There is no finite non-Abelian simple group $T$ satisfying
\begin{enumerate}[(a)]
\item $|T| = (s+1)^2(st'+1)$, where $1 \le t' \le s-1$;\label{eq:|T|t'}
\item $2\le s \le 8|\Out(T)| - 3$; and \label{eq:sBound}
\item $|T| \le (8|\Out(T)| - 2)((8|\Out(T)| - 3)^3+1)$.\label{eq:|T|Bound}
\end{enumerate}
\end{La}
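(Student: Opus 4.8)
The plan is to fuse hypotheses~\ref{eq:|T|t'} and~\ref{eq:sBound} into a single inequality bounding $|T|$ purely in terms of $|\Out(T)|$, then to run through the Classification of Finite Simple Groups, using the very small known values of $|\Out(T)|$ to cut the candidate list down to finitely many groups, and finally to eliminate each survivor by a direct arithmetic check against~\ref{eq:|T|t'}.

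First I would observe that~\ref{eq:|T|t'} already traps $|T|$ between $(s+1)^3$ and $(s+1)^4$: since $t'\ge 1$ we have $st'+1\ge s+1$, so $|T|\ge(s+1)^3$; and since $t'\le s-1$ we have $st'+1\le s(s-1)+1=s^2-s+1<(s+1)^2$, so $|T|<(s+1)^4$. Combining the latter with~\ref{eq:sBound}, which gives $s+1\le 8|\Out(T)|-2<8|\Out(T)|$, yields
\[
|T|\;<\;(s+1)^4\;<\;4096\,|\Out(T)|^4 .
\]
(Hypothesis~\ref{eq:|T|Bound} gives a comparable, and for small $|\Out(T)|$ slightly sharper, bound that one may use in its place.) Equivalently $|T|^{1/4}<s+1\le|T|^{1/3}$, so verifying~\ref{eq:|T|t'} for a given $T$ is the finite task of testing, for each perfect square $m^2$ dividing $|T|$ with $m^3\le|T|<m^4$, whether $|T|/m^2\equiv 1\pmod{m-1}$ with the resulting $t'=(|T|/m^2-1)/(m-1)$ lying in $\{1,\dots,m-2\}$.

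Now I would run through the CFSG. If $T$ is sporadic then $|\Out(T)|\le 2$, so the displayed bound forces $|T|<65536$; the only sporadic group below that order is $M_{11}$, for which $|\Out(M_{11})|=1$ forces the sharper $|T|<4096$, and since $|M_{11}|=7920$ no sporadic group occurs. If $T$ is alternating then $|\Out(T)|\le 4$, with equality only for $A_6$, so the bound gives $|T|<2^{20}$, leaving only $A_5,A_6,A_7,A_8$ (already $|A_9|>65536$ while $|\Out(A_9)|=2$), and for each of these four the arithmetic test above fails by a short hand computation. The substance of the lemma is the Lie type case $T=T(q)$ with $q=p^f$. Here $|\Out(T)|=dfg$ where $f\le\log_2 q$, the graph factor satisfies $g\le 6$, and the diagonal factor $d$ is at most linear in the Lie rank $\mathrm{rk}$ of $T$, while $|T|\ge q^N/(d\,2^{\mathrm{rk}})$ with $N$ the dimension of the ambient algebraic group. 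Substituting into the displayed inequality bounds both $\mathrm{rk}$ and $q$: at $q=2$ the quantity $2^{N}/(d\,2^{\mathrm{rk}})$ grows in $\mathrm{rk}$ faster than the polynomial-in-$\mathrm{rk}$ right-hand side, so $\mathrm{rk}$ is bounded by a small explicit constant; and for each type of bounded rank, $q^N$ grows in $q$ faster than $(\mathrm{rk}\cdot\log_2 q)^4$, so $q$ is bounded too. One is thus reduced to a finite list of small-rank types over small fields --- $\PSL_2$, $\PSL_3$, $\PSU_3$, $\PSp_4$, $\PSU_4$, $\PSL_4$, $\PSp_6$, $G_2$, the orthogonal groups in small dimension, ${}^2B_2$, ${}^2G_2$, ${}^3D_4$, ${}^2F_4$, and a handful more --- for each of which one checks, after trimming the list with~\ref{eq:sBound} where convenient, that~\ref{eq:|T|t'} has no solution.

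The step I expect to be the main obstacle is exactly this last one: the low-rank groups of Lie type over small fields, above all $\PSL_2(q)$, $\PSL_3(q)$ and $\PSU_3(q)$, where $|\Out(T)|$ can be as large as about $3\log_2 q$, so the global inequality does not by itself terminate the argument and one is pushed into an explicit finite search over the admissible $q$ and over the square divisors of $|T|$. A secondary technical nuisance is keeping the rank and field bounds uniform in $q$: enlarging $f$ does enlarge $|\Out(T)|$, but it enlarges $q$, and hence $|T|\approx q^N$, incomparably faster, so the crude estimate $f\le\log_2 q$ already closes the loop.
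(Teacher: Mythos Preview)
Your proposal is correct and takes essentially the same approach as the paper: both derive the bound $|T| < 2^{12}\,|\Out(T)|^4$ (you obtain it from \ref{eq:|T|t'} and \ref{eq:sBound} via $|T|<(s+1)^4$, the paper from \ref{eq:|T|Bound} via $(8x-2)((8x-3)^3+1)\le(8x)^4$, but the resulting inequality is identical), then run through the Classification family by family, bounding rank and field size to reduce to a finite list and finishing each survivor by the arithmetic test against \ref{eq:|T|t'} subject to \ref{eq:sBound}. Your identification of $\PSL_2(q)$, $\PSL_3(q)$ and $\PSU_3(q)$ as the cases requiring the longest explicit search matches exactly where the paper spends its effort.
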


\begin{proof}
Since $(8x-2)((8x-3)^3+1) \leq (8x)^4$ for real $x\ge 1$, condition~\ref{eq:|T|Bound} implies that
\begin{equation} \label{eq:|T|Bound2}
|T| \le 2^{12} |\Out(T)|^4.
\end{equation}
We use \eqref{eq:|T|Bound2} instead of \ref{eq:|T|Bound} to rule out certain possibilities for $T$.
\begin{description}

\item[Case 1. $T\cong\Alt_n$ or a sporadic simple group]
If $T\cong\Alt_6$, then $|\Out(T)|=4$ and there is no solution to \ref{eq:|T|t'} subject to \ref{eq:sBound}. 
If $T$ is an alternating group other than $\Alt_6$, or a sporadic simple group, then $|\Out(T)|\le 2$, and so \ref{eq:|T|Bound} implies that $|T| \le (13+1)(13^3+1) = 30\;772$. 
This rules out everything except $T\cong\Alt_5$, $\Alt_7$ and $M_{11}$, and for these cases one checks that there is no solution to \ref{eq:|T|t'} subject to \ref{eq:sBound}. 

\item[Case 2. $T\cong A_1(q)$]
Suppose that $T\cong A_1(q)$, and write $q=p^f$ with $p$ prime and $f\ge 1$. 
Then $|T|=q(q^2-1)/\gcd(2,q-1)$, and $|\Out(T)|=\gcd(2,q-1)f$. 

Suppose first that $q$ is even, namely that $p=2$. 
Then $\gcd(2,q-1)=1$, and \ref{eq:|T|Bound} implies that
\[
2^f(2^{2f}-1) \le (8f-2)((8f-3)^3+1),
\]
which holds only if $f\le 7$. 
If $f=1$, then $T$ is not simple; and if $f=2$, then $T\cong \Alt_5$, which we have already ruled out. 
For $3 \le f \le 7$, there is no solution to \ref{eq:|T|t'} subject to \ref{eq:sBound}. 

Now suppose that $q=p^f$ is odd. 
Then $\gcd(2,q-1)=2$, and hence $|\Out(T)|=2f$, so \ref{eq:|T|Bound} reads
\[
p^f(p^{2f}-1) \le 2(16f-2)((16f-3)^3+1).
\]
If $f\ge 6$, then this inequality fails for all $p \ge 3$. 
The inequality holds if and only if
\[
q=p^f \in \{3,5,7,3^2,11,13,17,19,23,5^2,3^3,29,31,37,7^2,3^4,5^3,3^5\}.
\]
If $q=3$, then $T$ is not simple; if $q=5$, then $T\cong \Alt_5$, which we have ruled out; if $q=7$, then $T\cong A_2(2)$, which is ruled out in Case~3 below; and if $q=9$, then $T\cong \Alt_6$, which we have ruled out. 
For the remaining values of $q$, 
there is no solution to \ref{eq:|T|t'} subject to \ref{eq:sBound}. 

\item[Case 3. $T\cong A_n(q)$, $n\ge 2$]
Suppose that $T\cong A_n(q)$, with $n\ge 2$ and $q=p^f$. 
Then
\[
|T| = \frac{q^{n(n+1)/2}}{\gcd(n+1,q-1)} \prod_{i=1}^n (q^{i+1}-1), 
\]
and $|\Out(T)| = 2 \gcd(n+1,q-1) f$. 

First suppose that $n\ge 3$. 
Noting that $f = \log_p(q) = \ln(q)/\ln(p) \le \ln(q)/\ln(2)$ and $\gcd(n+1,q-1) \le q-1$, and applying \eqref{eq:|T|Bound2}, we find
\[
q^{n(n+1)/2} \prod_{i=1}^n (q^{i+1}-1) \le \frac{2^{16}}{\ln^4(2)}(q-1)^5\ln^4(q).
\]
This inequality fails for all $q\ge 2$ if $n=4$, and therefore fails for all $q\ge 2$ for every $n\ge 4$ (because the left-hand side is increasing in $n$ while the right-hand side does not depend on $n$). 
It fails for $n=3$ unless $q\in\{2,3\}$, but $A_3(2) \cong \Alt_8$ has already been ruled out, and \ref{eq:|T|Bound} rules out $A_3(3)$ because $|A_3(3)|=6\;065\;280 > 30(29^3+1) = 731\;700$. 

Finally, suppose that $n=2$. 
Noting that $\gcd(3,q-1) \le 3$ and $f \le \ln(q)/\ln(2)$, \eqref{eq:|T|Bound2} gives
\[
q^3(q^2-1)(q^3-1) \le \frac{2^{16}3^5}{\ln^4(2)} \ln^4(q).
\]
This implies that $q\le 15$.
For $q\in\{5,8,9,11,13\}$, the sharper inequality \ref{eq:|T|Bound} fails.
For $q\in\{2,3,4,7\}$, there are no solutions to \ref{eq:|T|t'} subject to \ref{eq:sBound}. 

\item[Case 4. $T\cong {}^2A_n(q^2)$]
Suppose that $T\cong {}^2A_n(q^2)$, where now $q^2=p^f$ for some prime $p$ and $f\geq 1$. 
We have $n\ge 2$, 
\[
|T| = \frac{q^{n(n+1)/2}}{\gcd(n+1,q+1)} \prod_{i=1}^n (q^{i+1}-(-1)^{i+1}),
\]
and $|\Out(T)| = \gcd(n+1,q+1)f$.  

First suppose that $n \ge 4$. 
Noting that $f = \log_p(q) = \ln(q^2)/\ln(p) \le 2\ln(q)/\ln(2)$, and that $\gcd(n+1,q+1)\le q+1$, \eqref{eq:|T|Bound2} gives
\[
q^{n(n+1)/2} \prod_{i=1}^n (q^{i+1}-(-1)^{i+1}) \le \frac{2^{16}}{\ln^4(2)}(q+1)^5\ln^4(q).
\]
This inequality fails for all $q\ge 2$ for $n=4$, and hence fails for all $q\ge 2$ for every $n\ge 4$. 

Now suppose that $n=3$. 
Then we can replace the $(q+1)^5$ on the right-hand side above by $4^5=2^{10}$, because $\gcd(n+1,q+1)=\gcd(4,q+1)\le 4$. 
This yields
\[
q^6(q^2-1)(q^3+1)(q^4-1) \le \frac{2^{26}}{\ln^4(2)}\ln^4(q),
\]
which implies that $q\le 4$. 
If $q\in\{2,3\}$, then there are no solutions to \ref{eq:|T|t'} subject to \ref{eq:sBound}. 
If $q=4$, then \ref{eq:|T|Bound} fails.

Finally, suppose that $n=2$. 
Then $\gcd(n+1,q+1) \le 3$, and hence 
\[
q^3(q^2-1)(q^3+1) \le \frac{2^{16}3^5}{\ln^4(2)} \ln^4(q),
\]
which implies that $q \le 15$.
If $q=2$, then $T\cong {}^2A_2(2^2)$ is not simple. 
If $q\in\{3,4,5,8\}$, then there are no solutions to \ref{eq:|T|t'} subject to \ref{eq:sBound}. 
If $q\in\{7,9,11,13\}$, then \ref{eq:|T|Bound} fails. 

\item[Case 5. Remaining possibilities for $T$] \label{HSallotherT}
We now rule out the remaining possibilities for the finite simple group $T$.

(i) {\em $T\cong B_n(q)$ or $C_n(q)$.} 
First suppose that $T\cong C_n(q)$, and write $q=p^f$ with $p$ prime and $f\ge 1$. 
We have $n\ge 3$, $|T|=q^{n^2}/\gcd(2,q-1)\cdot \prod_{i=1}^n(q^{2i}-1)$, and $|\Out(T)| = \gcd(2,q-1)f$. 
Noting that $f \le \ln(q)/\ln(2)$ and $\gcd(2,q-1) \le 2$, \eqref{eq:|T|Bound2} implies that
\[
q^{n^2} \prod_{i=1}^n(q^{2i}-1) \le \frac{2^{17}}{\ln^4(2)} \ln^4(q).
\]
However, this inequality fails for all $q\ge 2$ if $n=3$, and hence fails for all $q\ge 2$ for every $n \ge 3$.

Now suppose that $T\cong B_n(q)$, writing $q=p^f$ as before. 
In this case we have $n\ge 2$, and again $|T|=q^{n^2}/\gcd(2,q-1)\cdot \prod_{i=1}^n(q^{2i}-1)$. 
If $n\ge 3$ and $q$ is even, then $B_n(q) \cong C_n(q)$. 
If $n\ge 3$ and $q$ is odd, then $|\Out(T)|$ is the same as for $C_n(q)$. 
We may therefore assume that $n=2$. 
First suppose that $q=2^f$. 
Then $|\Out(T)| = 2\gcd(2,q-1)f = 2f$, so \eqref{eq:|T|Bound2} implies that
\begin{equation} \label{B2lastcase}
2^{4f}(2^{2f}-1)(2^{4f}-1) \le 2^{16}f^4,
\end{equation}
and hence $f\in\{1,2\}$. 
For $f=1$, $B_2(2)$ is not simple but its derived subgroup $B_2(2)'\cong \Alt_6$ is simple and has already been ruled out. 
For $f=2$, \ref{eq:|T|Bound} fails. 
Now suppose that $q$ is odd. 
Then $|\Out(T)| = \gcd(2,q-1)f = 2f$ and $f \le \ln(q)/\ln(3)$, so \eqref{eq:|T|Bound2} implies that
\[
q^4(q^2-1)(q^4-1) \le \frac{2^{17}}{\ln^4(3)} \ln^4(q),
\]
and hence $q=3$. 
However, $B_2(3) \cong {}^2A_3(2^2)$, which has been dealt with in Case~4.

(ii) {\em $T\cong D_n(q)$.}
Suppose that $T\cong D_n(q)$, writing $q=p^f$ again. 
We have $n\ge 4$, $|T| = q^{n(n-1)} (q^n-1) / \gcd(4,q^n-1) \cdot \prod_{i=1}^{n-1} (q^{2i}-1)$, and 
\[
|\Out(T)| = \begin{cases} 
6 \gcd(2,q-1)^2 f & \text{if } n=4 \\
2 \gcd(2,q-1)^2 f & \text{if } n<4 \text{ and } n \text{ is even} \\
2 \gcd(4,q^n-1) f & \text{if } n<4 \text{ and } n \text{ is odd}.
\end{cases}
\] 
If $q$ is odd, then $\gcd(4,q^n-1) \le 4$, $|\Out(T)| \le 24f$, and $f\le \ln(q)/\ln(3)$, so \eqref{eq:|T|Bound2} implies that
\[
q^{n(n-1)} (q^n-1) \prod_{i=1}^{n-1} (q^{2i}-1) \le \frac{2^{26}3^4}{\ln^4(3)} \ln^4(q),
\]
which fails for all $q\ge 3$ if $n=4$, and hence fails for all $q\ge 3$ for every $n\ge 4$. 
If $q$ is even, then $\gcd(4,q^n-1) =1$, $|\Out(T)| \le 6f$ and $f=\ln(q)/\ln(2)$, so \eqref{eq:|T|Bound2} implies that
\[
q^{n(n-1)} (q^n-1) \prod_{i=1}^{n-1} (q^{2i}-1) \le \frac{2^{16}3^4}{\ln^4(2)} \ln^4(q),
\]
which fails for all $q\ge 2$ if $n=4$, and hence fails for all $q\ge 2$ for every $n\ge 4$. 

(iii) {\em $T\cong E_6(q), E_7(q), E_8(q)$ or $F_4(q)$.}
Suppose that $T$ is one of $E_6(q)$, $E_7(q)$, $E_8(q)$ or $F_4(q)$, and write $q=p^f$ again. 
Observe that $|E_i(q)| \ge |F_4(q)|$ for every $i\in\{6,7,8\}$, for all $q\ge 2$. 
Hence
\[
|T| \ge |F_4(q)| = q^{24}(q^{12}-1)(q^8-1)(q^6-1)(q^2-1) \ge \frac{q^{52}}{2^4}.
\]
Since $|\Out(T)| \le 2\gcd(3,q-1) f \le 6\ln(q)/\ln(2)$, \eqref{eq:|T|Bound2} implies the following inequality, which fails for all $q\ge 2$:
\[
q^{52} \le \frac{2^{20}3^4}{\ln^4(2)} \ln^4(q).
\]

(iv) {\em $T\cong G_2(q)$.}
Suppose that $T\cong G_2(q)$, with $q=p^f$. 
Then $|T| = q^6(q^6-1)(q^2-1)$. 
If $p=3$, then $|\Out(T)| = 2f$, so \ref{eq:|T|Bound} implies that $3^{6f}(3^{6f}-1)(3^{2f}-1) \le 2^{16}f^4$, which fails for all $f\ge 1$. 
If $p \neq 3$, then $|\Out(T)| = f \le \ln(q)/\ln(2)$, and \eqref{eq:|T|Bound2} implies the following inequality, which fails for all $q\ge 2$:
\[
q^6(q^6-1)(q^2-1) \le \frac{2^{12}}{\ln^4(2)} \ln^4(q).
\]
Note that $G_2(2)$ is not simple, but $G_2(2)' \cong {}^2A_2(3^2)$ is simple and has already been ruled out.

(v) {\em $T\cong {}^2D_n(q)$.}
Suppose that $T\cong {}^2D_n(q^2)$, now writing $q^2=p^f$. 
Then $n\ge 4$, 
\[
|T| = \frac{q^{n(n-1)} (q^n+1)}{\gcd(4,q^n+1)} \prod_{i=1}^{n-1} (q^{2i}-1),
\] 
and $|\Out(T)| = \gcd(4,q^n+1)f$. 
Since $f \le 2\ln(q)/\ln(2)$ and $\gcd(4,q^n+1) \le 4$, \eqref{eq:|T|Bound2} implies that
\[
q^{n(n-1)} (q^n+1) \prod_{i=1}^{n-1} (q^{2i}-1) \le \frac{2^{26}}{\ln^4(2)} \ln^4(q).
\]
This fails for all $q\ge 2$ if $n=4$, and hence fails for all $q\ge 2$ for every $n\ge 4$.

(vi) {\em $T\cong {}^2E_6(q^2)$.} 
Suppose that $T\cong {}^2E_6(q^2)$, with $q^2=p^f$. 
Then
\[
|T|=\frac{1}{(3,q+1)} q^{36}(q^{12}-1)(q^9+1)(q^8-1)(q^6-1)(q^5+1)(q^2-1),
\]
and $|\Out(T)| = \gcd(3,q+1)f$. 
Noting that $f \le 2\ln(q)/\ln(2)$ and $\gcd(3,q+1) \le 3$, \eqref{eq:|T|Bound2} implies the following inequality, which fails for all $q\ge 2$:
\[
q^{36}(q^{12}-1)(q^9+1)(q^8-1)(q^6-1)(q^5+1)(q^2-1) \le \frac{3^5 2^{16}}{\ln^4(2)} \ln^4(q).
\]

(vii) {\em $T\cong {}^3D_4(q^3)$.} 
Suppose that $T\cong {}^3D_4(q^2)$, where now $q^3=p^f$. 
Then 
\[
|T|=q^{12}(q^8+q^4+1)(q^6-1)(q^2-1), 
\]
and $|\Out(T)| = f = 3\ln(q)/\ln(p) \le 3\ln(q)/\ln(2)$, so \eqref{eq:|T|Bound2} implies the following inequality, which fails for all $q\ge 2$:
\[
q^{12}(q^8+q^4+1)(q^6-1)(q^2-1) \le \frac{3^4 2^{12}}{\ln^4(2)} \ln^4(q).
\]

(viii) {\em $T\cong {}^2B_2(q)$, ${}^2G_2(q)$, or ${}^2F_4(q)$.}
Finally, suppose that $T$ is as in one of the lines of Table~\ref{tab:Suz}. 
Suppose first that $n\ge 1$. 
Then $|\Out(T)|=2n+1$ in each case, and \eqref{eq:|T|Bound2} therefore implies that $|T| \le 2^{12}(2n+1)^4$. 
This inequality holds only in the case $T\cong {}^2B_2(2^{2n+1})$ with $n=1$, but $|^2B_2(2^3)| = 29\;120$ cannot be written in the form \ref{eq:|T|t'} subject to \ref{eq:sBound}. 
For $n=0$, we have that ${}^2B_2(q)$ is not simple; ${}^2G_2(3)$ is not simple, but ${}^2G_2(3)' \cong A_1(8)$ has been ruled out in Case~2 above; and $^2F_4(2)$ is not simple, but $^2F_4(2)'$ is simple of order $17\;971\;200$ and has outer automorphism group of order $2$, so \eqref{eq:|T|Bound2} fails.
\end{description}

This completes the proof of Lemma~\ref{HS_hammer}.
\end{proof}

\begin{table}[!t]
\begin{center}
\begin{tabular}{lll}
\toprule
$T$ & $|T|$ & $q$ \\
\midrule
$^2B_2(q)$ & $q^2(q^2+1)(q-1)$ & $2^{2n+1}$ \\ 
$^2G_2(q)$ & $q^3(q^3+1)(q-1)$ & $3^{2n+1}$ \\ 
$^2F_4(q)$ & $q^{12}(q^6+1)(q^4-1)(q^3+1)(q-1)$ & $2^{2n+1}$ \\ 
\bottomrule
\end{tabular}
\end{center}
\caption{Orders of the Suzuki and Ree simple groups.}\label{tab:Suz}
\end{table}

\section{Proof of Theorem~\ref{mainGQthm}: HC type} \label{sec:proofHC}

Suppose that $\mathcal{Q}=(\mathcal{P},\mathcal{L},\inc)$ is a thick generalised quadrangle with a collineation group $G$ that acts transitively on $\mathcal{L}$ and primitively of O'Nan--Scott type HC on $\mathcal{P}$. 
Then
\[
M \rtimes \operatorname{Inn}(M) \le G \le M \rtimes \operatorname{Aut}(M),
\]
where $M=T_1\times \cdots \times T_k$, with $k\ge 2$ and $T_1 \cong \cdots \cong T_k \cong T$ for some non-Abelian finite simple group $T$. 
Moreover, $M$ acts regularly on $\mathcal{P}$, and $G$ induces a subgroup of $\text{Aut}(T) \wr S_k$ which acts transitively on the set $\{T_1,\ldots,T_k\}$ (see \cite[Section 5]{MR1477745}).
Since $|M|=|T|^k$ is even by the Feit--Thompson Theorem~\cite{MR0166261}, Lemma~\ref{GQsemireg} tells us that $\mathcal{Q}$ satisfies the hypotheses of Theorem~\ref{prop:flagTrans} and Corollary~\ref{GQcor}. 
In particular, $s+1$ divides $t-1$ (by Corollary~\ref{GQcor}), and we define $t'$ as in \eqref{eq:t'}.

We first rule out the case $k\ge 3$, and then deal with the case $k=2$ separately.

\subsection{$k\ge 3$}

Suppose, towards a contradiction, that $k\ge 3$. 
Denote by $\ell_1,\ldots,\ell_{t+1}$ the lines incident with the identity $1 \in M$. 
By Lemma~\ref{lineUnionCosets}(ii), we may identify $\ell_i$ with the subgroup of $M$ comprising all points incident with $\ell_i$.
Let us write $\ell := \ell_1$ for brevity. 

\begin{Claim} \label{claim1}
$M$ cannot be decomposed in the form $M=A\times B$ with $\ell \cap A \neq \{1\}$ and $\ell \cap B \neq \{1\}$. 
\end{Claim}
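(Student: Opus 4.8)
The plan is to argue by contradiction. Suppose $M=A\times B$ with $A_0:=\ell\cap A$ and $B_0:=\ell\cap B$ both non-trivial, and fix non-trivial elements $a_0\in A_0$ and $b_0\in B_0$; note that $a_0,b_0\in\ell$.

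The crucial first step is to show that this forces $\ell\trianglelefteq M$. Let $b\in B$. Then conjugation $\iota_b\colon x\mapsto b^{-1}xb$ lies in $\Inn(M)\le G$, so it is a collineation of $\mathcal{Q}$; it fixes the point $1\in M=\mathcal{P}$, and hence carries the line $\ell$, which is a subgroup of $M$ through $1$ by Lemma~\ref{lineUnionCosets}(ii), to a line through $1$. As a subset of $\mathcal{P}=M$ this image line is $b^{-1}\ell b$, so $b^{-1}\ell b=\ell_j$ for some $j\in\{1,\dots,t+1\}$. Since $b\in B$ centralises $A\ni a_0$, we have $a_0=b^{-1}a_0b\in b^{-1}\ell b=\ell_j$; but $a_0\in\ell=\ell_1$ as well, and distinct lines through $1$ meet only in $1$, so $j=1$ and $b$ normalises $\ell$. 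Hence $B\le N_M(\ell)$, and the symmetric argument with $b_0$ in place of $a_0$ gives $A\le N_M(\ell)$; therefore $M=\langle A,B\rangle$ normalises $\ell$.

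Next I would turn this into numerical constraints. Since $M=T_1\times\cdots\times T_k$ with the $T_i$ non-Abelian simple, every normal subgroup of $M$ is a product of some of the $T_i$, so $\ell=\prod_{i\in S}T_i$, and likewise $A=\prod_{i\in I}T_i$ and $B=\prod_{i\in J}T_i$ for a partition $\{1,\dots,k\}=I\sqcup J$. Then $A_0=\prod_{i\in S\cap I}T_i$ and $B_0=\prod_{i\in S\cap J}T_i$ are both non-trivial, so $|S|\ge2$; moreover $s+1=|\ell|=|T|^{|S|}$ and $|M|=|T|^k=(s+1)(st+1)$. By Theorem~\ref{prop:flagTrans} the lines $\ell_1,\dots,\ell_{t+1}$ through $1$ form a single $G_1$-conjugacy class with $G_1\le\Aut(M)$, and $\Aut(M)$ merely permutes the factors $T_i$, so each $\ell_m$ equals a product of exactly $|S|$ of the $T_i$; these index sets are pairwise disjoint, since distinct lines through $1$ meet only in $1$, and hence $(t+1)|S|\le k$.

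Finally I would close the argument arithmetically. Higman's inequality gives $t\le s^2$, so $|M|=(s+1)(st+1)\le(s+1)(s^3+1)<(s+1)^4=|T|^{4|S|}$, forcing $k<4|S|$; together with $(t+1)|S|\le k$ this yields $t+1<4$, so $t=2$ since $t\ge2$ by thickness. But then Corollary~\ref{GQcor}, which applies to $\mathcal{Q}$ by the reduction recorded at the start of this section, says $s+1$ divides $t-1=1$, contradicting $s\ge2$. I expect the first step to be the main obstacle: the point is to exploit precisely that conjugation by an element of one direct factor is a collineation which fixes $1$, permutes the lines through $1$, and yet fixes the distinguished non-trivial point $a_0$ of $\ell$ lying in the complementary factor. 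Everything after that is routine bookkeeping with normal subgroups of $T^k$ and a one-line size estimate.
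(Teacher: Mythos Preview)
Your proof is correct and takes a cleaner route than the paper's. Both arguments begin with the same observation: conjugation by an element of one direct factor is a collineation fixing $1$ and also fixing the chosen non-trivial element of $\ell$ lying in the complementary factor, hence it fixes $\ell$. The paper applies this asymmetrically, using only $\Inn(A)$-invariance of $\ell$ to push through a somewhat delicate argument (choosing $x\in\ell\cap A$ with full support on $A$, generating $T_1$ from conjugates of $x_1$, deducing $T_1\le\ell$, and then proving a separate sub-claim that $\ell$ equals a product $T_{U_1}$ of $k/(t+1)$ factors). You instead apply the observation symmetrically to obtain $A,B\le N_M(\ell)$ and hence $\ell\trianglelefteq M$ in one stroke; the structure of normal subgroups of $T^k$ then gives $\ell=\prod_{i\in S}T_i$ immediately, with no sub-claim needed. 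Your endgame is also slightly softer: you only use the inequality $(t+1)|S|\le k$ coming from disjointness of the index sets, rather than the exact equality the paper extracts, but combined with $k<4|S|$ from Higman it still forces $t\le 2$ and the same contradiction via Corollary~\ref{GQcor}. The observation that $|S|\ge 2$ is correct but not actually needed for your arithmetic.
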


\begin{Prf}
Suppose, towards a contradiction, that $M=A\times B$ with $\ell \cap A \neq \{1\}$ and $\ell \cap B \neq \{1\}$. 
We may assume, without loss of generality, that (i) $A$ contains $T_1$, and (ii) $\ell \cap A$ contains an element $x=(x_1,\ldots,x_k)$ that projects non-trivially onto each simple direct factor of $A$ (if not, then change the decomposition of $M$ to $A'\times B'$ with $A'\le A$ and $B'\ge B$). 
Take also $y \in \ell \cap B$ with $y \neq 1$. 
For every $a \in \text{Inn}(A) \le \text{Inn}(M)$, we have $y^a=y$ and hence $\ell^a=\ell$, because $a$ also fixes the point $1\in \ell$. 
In particular, $\ell$ is fixed by every element of $\Inn(T_1)$, regarded as a subgroup of $\Inn(A)$. 
Therefore, $(z,x_2,\ldots,x_k) \in \ell$ for all $z \in x_1^{T_1}$, and hence $\ell$ contains the group $\ell_0 := \langle (z,x_2,\ldots,x_k) : z \in x_1^{T_1} \rangle$. 
Let $\pi_1$ denote the projection onto $T_1$. 
Then $\pi_1(\ell_0) = \langle z : z \in x_1^{T_1} \rangle = T_1$, and hence $\pi_1(\ell) = T_1$. 
Also, taking $z \neq x_1$, we see that $\ell \cap T_1$ contains $(z,x_2,\ldots,x_k)^{-1}x = (z^{-1}x_1,1,\ldots,1) \neq 1$. 
That is, $\ell \cap T_1$ is non-trivial, and it is normal in the simple group $\pi_1(\ell)=T_1$, so $\ell \cap T_1 = T_1$ and hence $T_1 \le \ell$. 
Now, $G_1$ acts transitively on both $\{T_1,\ldots,T_k\}$ (because $G$ is transitive on $\{T_1,\ldots,T_k\}$ and $G=MG_1$) and $\{\ell_1,\ldots,\ell_{t+1}\}$ (because $G$ is flag-transitive, by Theorem~\ref{prop:flagTrans}). 
Therefore, $t+1$ divides $k$, and, without loss of generality, $\ell=\ell_1$ contains $T_{U_1} := T_1\times \cdots \times T_{k/(t+1)}$, $\ell_2$ contains $T_{U_2} := T_{k/(t+1)+1}\times \cdots \times T_{2k/(t+1)}$, and so on.

{\em Sub-claim.} $\ell = T_{U_1}$. 

{\em Proof of sub-claim.} 
It remains to show that $T_{U_1}$ contains $\ell$. 
Suppose, towards a contradiction, that there exists $w \in \ell \setminus T_{U_1}$. 
Then there exists $i > k/(t+1)$ such that the $i$th component $w_i$ of $w$ is non-trivial, and so there exists $\sigma \in \text{Inn}(T_i)$ such that $w_i^\sigma \neq w_i$. 
Regarding $\sigma$ as an element of $\Inn(M) \le G_1$, we see that $\sigma$ fixes $\ell$, because it centralises $T_1 \le \ell$. 
Hence, $w^\sigma \in \ell$, and so $\ell$ contains $w^{-1}w^\sigma \in \ell\cap T_i \setminus \{1\}$. 
However, $T_i \le T_{U_j} \le \ell_j$ for some $j\neq 1$, and hence $\ell$ intersects $\ell_j$ in more than one point, a contradiction, proving the sub-claim.

By the sub-claim, $s+1=|T|^u$, where $u=k/(t+1)$. 
Since $|T|^{(t+1)u} = |M|$, we have $(s+1)^{t+1} = (s+1)^2(st'+1)$, where $t' := (t-1)/(s+1) \le s-1$ as before. 
Since $st'+1 \le s(s-1)+1 < (s+1)^2$, this implies that $(s+1)^{t-1} < (s+1)^2$, so $t=2$, and hence $s+1 \mid t-1=1$, a contradiction.
\end{Prf}

\begin{Claim} \label{claim2}
$\ell$ is isomorphic to a subgroup of $T$. 
\end{Claim}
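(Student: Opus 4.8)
The plan is to pin $\ell$ down up to isomorphism by analysing it as a subgroup of $M = T_1 \times \cdots \times T_k$, using the mechanism from the proof of Claim~\ref{claim1}: any collineation in $G_1$ that also fixes a second point of $\ell$ must fix $\ell$ setwise, since $\mathcal{Q}$ is a partial linear space. Write $\pi_i\colon M \to T_i$ for the $i$th coordinate projection and, for $g = (g_1,\dots,g_k)\in M$, put $\operatorname{supp}(g) = \{\, i : g_i \neq 1 \,\}$. Since $\ell$ is a subgroup of $M$ of order $s+1 \ge 3$ (Lemma~\ref{lineUnionCosets}(ii)), it is nontrivial, so I would choose $y \in \ell \setminus \{1\}$ with $|\operatorname{supp}(y)|$ minimal among the nonidentity elements of $\ell$, and set $I = \operatorname{supp}(y)$. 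The argument then splits according to whether or not $\pi_i(\ell) = \{1\}$ for every $i \notin I$.

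Suppose first that $\pi_i(\ell) = \{1\}$ for all $i \notin I$, so that $\ell \le \prod_{i\in I} T_i$. Fix $i_0 \in I$. The kernel of the restriction of $\pi_{i_0}$ to $\ell$ lies in $\prod_{i \in I \setminus \{i_0\}} T_i$, so every element of that kernel has support a proper subset of $I$ and hence is trivial, by minimality of $|I|$. Therefore $\pi_{i_0}$ restricts to an injection $\ell \hookrightarrow T_{i_0} \cong T$, which gives the conclusion.

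Now suppose instead that $\pi_j(\ell) \neq \{1\}$ for some $j \notin I$; I claim this leads to a contradiction. For such a $j$ we have $y_j = 1$, so every $\sigma \in \Inn(T_j) \le \Inn(M) \le G_1$ centralises $y$, hence fixes the two distinct points $1$ and $y$, hence fixes $\ell$ setwise (as $\ell$ is the unique line through $1$ and $y$). Choosing $w \in \ell$ with $w_j \neq 1$ and $\sigma \in \Inn(T_j)$ with $w_j^{\sigma} \neq w_j$ — the first possible because $\pi_j(\ell) \neq \{1\}$, the second because $T_j$ is non-abelian simple — the element $w^{-1}w^{\sigma}$ lies in $\ell$, is nontrivial, and is supported only on the factor $T_j$. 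Minimality of $|I|$ then forces $I = \{i_0\}$ with $i_0 \neq j$, so $\ell \cap T_{i_0}$ (which contains $y$) and $\ell \cap T_j$ are both nontrivial; applying Claim~\ref{claim1} to the decomposition $M = T_{i_0} \times \prod_{i\neq i_0} T_i$, in which $T_j$ lies in the second factor, gives the desired contradiction. Hence the first case always holds and $\ell$ embeds in $T$.

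I expect the crux to be the realisation, in the second case, that $\Inn(T_j)$ stabilising $\ell$ should be combined with an element $w \in \ell$ via the commutator $w^{-1}w^{\sigma}$: this is what turns the setwise stabilisation into a genuine nonidentity element of $\ell \cap T_j$, precisely the configuration that Claim~\ref{claim1} forbids. The one point of care is to set up the dichotomy in terms of the index set $\{\, i : \pi_i(\ell) \neq \{1\}\,\}$ rather than all of $\{1,\dots,k\}$, so that the index $j$ is genuinely available whenever the first case fails; everything else is routine.
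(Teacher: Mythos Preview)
Your proof is correct and follows essentially the same route as the paper's. Both pick a nonidentity element of $\ell$ of minimal support, use an element of $\Inn(T_j)$ fixing a second point of $\ell$ to conclude it stabilises $\ell$, form a commutator to produce a nontrivial element of $\ell$ supported on a single coordinate, and then combine minimality with Claim~\ref{claim1}; the only cosmetic difference is that the paper conjugates the minimal-support element (on a coordinate in its support), whereas you conjugate the auxiliary element $w$ (on a coordinate outside the minimal support).
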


\begin{Prf}
Let $x \in \ell \setminus \{1\}$ have minimal support $U$.
Suppose, without loss of generality, that $x_1 := \pi_1(x) \neq 1$. 
Suppose further, towards a contradiction, that there exists $y \in \ell \setminus \{1\}$ with $\pi_1(y)=1$. 
Then every $a \in \Inn(T_1)$ fixes $y$ and hence fixes $\ell$, so $\ell$ contains $x^a$ and therefore contains $x^a x^{-1} \in T_1\cap \ell$. 
Taking $a$ not in $C_T(x_1)$ makes $x^a x^{-1}$ non-trivial, and the minimality of the support $U$ of $x$ implies that $U=\{1\}$, so $x \in T_1$. 
However, the existence of $y$ now contradicts Claim~\ref{claim1}, because taking $A=T_1$ and $B=T_2\times\cdots\times T_k$ gives $x \in \ell\cap A$ and $y \in \ell\cap B$. 
Hence, if $x$ has minimal support $U$ containing $1$, then every non-trivial element of $\ell$ must project non-trivially onto $T_1$. 
Therefore, $\ell$ is isomorphic (under projection) to a subgroup of $T_1$. 
\end{Prf}
 
We now use Claim~\ref{claim2} to derive a contradiction to the assumption that $k\ge 3$. 
By Claim~\ref{claim2}, $s+1=|\ell|$ divides $|T|$, so in particular $s+1\le |T|$. 
Writing $|M|=(s+1)^2(st'+1)$ with $t':=(t-1)/(s+1)\le s-1$ as before, we have $(s+1)^2 > s(s-1)+1 \ge st'+1 = |M|/(s+1)^2 \ge |M|/|T|^2 = |T|^{k-2} \ge (s+1)^{k-2}$, and hence $2>k-2$, namely $k\le 3$.

Now suppose, towards a contradiction, that $k=3$. 
Write $|T|=n(s+1)$. 
Then $st+1 = |M|/(s+1) = |T|^3/(s+1) = n^3(s+1)^2$, and hence $n^3 \equiv 1 \pmod s$. 
On the other hand, $s^3+1 \ge st+1 = n^3 (s+1)^2 > n^3s^2+1$, so $n^3<s$. 
Therefore, $n=1$, so $|T|=s+1$ and $t=s+2$. 
Together with Claim~\ref{claim2}, this implies that $\ell$ is isomorphic to $T$. 
Consider first the case where $\ell$ is a diagonal subgroup $\{(t,t^a, t^b) : t\in T\} \le M$ for some $a,b\in \text{Aut}(T)$.
As $(c,d)\in \Inn(T_2) \times \Inn(T_3)\le G_1$ runs over all possibilities, we obtain $|T|^2$ distinct images $\ell^{(c,d)} = \{ (t,t^{ac},t^{bd}) : t\in T\}$ of $\ell$. 
Indeed, if $\ell=\ell^{(c,d)}$, then $t^a=t^{ac}$ for all $t \in T$, or equivalently, $u=u^c$ for all $u \in T$; that is, $c$ is the identity automorphism of $T$ (and similarly, $d$ is the identity). 
Hence, $s+3=t+1\geq (s+1)^2$, a contradiction.
Now consider the case where $\ell$ is a diagonal subgroup $\{(t,t^a, 1) : t\in T\} \le T_1 \times T_2$ for some $a\in \text{Aut}(T)$.
Then $3$ divides $t+1$ because $G_1$ is transtive on the $T_i$, and we have exactly $(t+1)/3$ lines incident with $1$ that are diagonal subgroups of $T_1 \times T_2$. 
As $c\in \text{Inn}(T_2) \le G_1$ runs over all possibilities, we obtain $|T|$ distinct images $\ell^c = \{ (t,t^{ac},1) : t\in T\}$ of $\ell$. 
Hence, $(s+3)/3=(t+1)/3\ge s+1$, a contradiction.
This leaves only the possibility that $\ell\le T_1$, and hence $\ell=T_1$ because $|\ell|=s+1=|T_1|$. 
This implies that $t+1=3$, and hence $s=0$ because $s+1$ divides $t-1$, a contradiction.

\subsection{$k=2$}

Here we argue as in the case where the primitive action of $G$ on $\mathcal{P}$ has type HS. 
That is, we obtain an upper bound on $|T|$ in terms of $|\Out(T)|$, and consider the possibilities for $T$ case by case using the Classification of Finite Simple Groups. 
We have $M=T_1 \times T_2 \cong T^2$, and
\[
|M| = (s+1)(st+1) = (s+1)^2(st'+1), \quad \text{where } 1\le t'\le s-1.
\]
Therefore,
\[
|T| = (s+1)(st'+1)^{1/2}, \quad \text{where } 1\le t'\le s-1 \text{ and } st'+1 \text{ is a square.}
\]
Writing $y^2=st'+1$, this is equivalent to
\[
|T| = (s+1)y, \quad \text{where } 3\le y^2\le s(s-1)+1\text{ and } s\mid y^2-1.
\]
By Theorem~\ref{prop:flagTrans}, $G_1\le \text{Aut}(M) \cong \text{Aut}(T) \wr S_2$ acts transitively on the lines incident with $1$, and hence $t+1$ divides $|\text{Aut}(M)| = 2 |T|^2 |\Out(T)|^2$. 
Therefore, $|\Out(T)|^2$ is divisible by 
\[
\frac{t+1}{\gcd(t+1,2|T|^2)} = \frac{t+1}{\gcd(t+1,2(s+1)^2(st'+1))}.
\] 
In particular, $t+1 \le \gcd(t+1,2|T|^2)|\Out(T)|^2$. 
We have (i) $\gcd(t+1,s+1)=2$, so $\gcd(t+1,2(s+1)^2) \le 8$; and (ii) $\gcd(t+1,st'+1) = \gcd(t+1,t'+1)$. 
Hence, $\gcd(t+1,2|T|^2) \le 8(t'+1)$, and so $t+1 \le 8(t'+1)|\Out(T)|^2$. 
Re-writing this as $t'(s+1) + 2 \le 8(t'+1)|\Out(T)|^2$, and noting that $t'\ge 1$, we obtain
\[
s \le 16|\Out(T)|^2 - 3.
\]
Higman's inequality then gives
\[
|T|^2 = |M| \le (16|\Out(T)|^2 - 2)((16|\Out(T)|^2 - 3)^3+1).
\]
The following lemma therefore rules out all but two possibilities for $T$.

\begin{La}\label{HC_hammer}
Let $T$ be a finite non-Abelian simple group satisfying
\begin{enumerate}[(a)]
\item $|T| = (s+1)y$, where $3\le y^2\le s(s-1)+1$ and $s\mid y^2-1 $; \label{eq:|T|t'2}
\item $2\le s \le 16|\Out(T)|^2 - 3$; and \label{eq:sBoundk=2}
\item $|T|^2 \le (16|\Out(T)|^2 - 2)((16|\Out(T)|^2 - 3)^3+1)$. \label{eq:|T|Boundk=2}
\end{enumerate}
Then either \textnormal{(i)} $T\cong \Alt_6$, $s=19$, and $y=18$; or \textnormal{(ii)} $T\cong A_2(2)$, $s=13$, and $y=12$.
\end{La}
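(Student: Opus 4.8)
The plan is to follow the strategy of Lemma~\ref{HS_hammer} almost verbatim; the only genuinely new feature is the perfect-square constraint hidden in condition~\ref{eq:|T|t'2}. First I would extract from~\ref{eq:|T|Boundk=2} a crude but convenient bound on $|T|$. Since $(16x-2)((16x-3)^3+1)\le (16x)^4$ for all real $x\ge 1$, taking $x=|\Out(T)|^2$ gives $|T|^2\le 2^{16}|\Out(T)|^8$, that is,
\[
|T|\le 2^8|\Out(T)|^4.
\]
This is sharper than the bound~\eqref{eq:|T|Bound2} used in the proof of Lemma~\ref{HS_hammer}, so the same division into cases under the Classification of Finite Simple Groups --- namely ($\Alt_n$ or sporadic), $A_1(q)$, $A_n(q)$ with $n\ge 2$, ${}^2A_n(q^2)$, and the remaining families of Lie type treated in Case~5 of that proof --- leaves only finitely many candidates in each family, and in fact every surviving $T$ already appeared there. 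Concretely, for each exceptional family the left-hand side of the relevant inequality grows much faster in the rank than the right-hand side does (which depends on the rank only through $|\Out(T)|$), so it suffices to check the smallest admissible rank and then a short list of small $q$; for the classical families one combines $f\le \ln(q)/\ln(p)$ with the appropriate gcd bound on $|\Out(T)|$ exactly as before.

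Having reduced to a finite list of simple groups $T$, each with its known order and $|\Out(T)|$, the second step is to test condition~\ref{eq:|T|t'2} subject to~\ref{eq:sBoundk=2}. For a fixed $T$, a solution is a divisor $d=s+1$ of $|T|$ with $2\le s\le 16|\Out(T)|^2-3$ such that, on setting $y=|T|/d$, one has $3\le y^2\le s(s-1)+1$ and $s\mid y^2-1$. Running through the (few) admissible divisors $d$ of $|T|$ for each candidate, I expect every case to fail except the two asserted ones: for $T\cong\Alt_6$ one has $|T|=360=20\cdot 18$, so $s=19$, $y=18$, and indeed $y^2=324\le 343=19\cdot 18+1$ while $19\mid 323$; for $T\cong A_2(2)\cong\PSL(2,7)$ one has $|T|=168=14\cdot 12$, so $s=13$, $y=12$, and $y^2=144\le 157=13\cdot 12+1$ while $13\mid 143$. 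A direct check confirms that both pairs also satisfy~\ref{eq:sBoundk=2} and~\ref{eq:|T|Boundk=2}.

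The main obstacle is simply the volume of the finite verification, as in Lemma~\ref{HS_hammer}; conceptually nothing new is required to eliminate the large simple groups, and since $2^8<2^{12}$ every candidate surviving here is among those already handled in the HS proof. The one place demanding care beyond the HS argument is the divisibility-and-square bookkeeping in the last step: a single $T$ may have several divisors of $|T|$ lying in the window $[3,\,16|\Out(T)|^2-2]$ for $s+1$, so each must be tested against both $s\mid y^2-1$ and $y^2\le s(s-1)+1$. It is precisely this perfect-square condition, absent from Lemma~\ref{HS_hammer}, that trims the list of survivors down to exactly the two groups $\Alt_6$ and $A_2(2)$.
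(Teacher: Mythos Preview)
Your proposal is correct and follows essentially the same approach as the paper: derive the sharper bound $|T|\le 2^8|\Out(T)|^4$ from~\ref{eq:|T|Boundk=2}, observe that it implies~\eqref{eq:|T|Bound2} so that every group eliminated by the crude bound in Lemma~\ref{HS_hammer} is automatically eliminated here, and then run through the CFSG cases testing the remaining finitely many candidates against~\ref{eq:|T|t'2} and~\ref{eq:sBoundk=2}. The paper's proof differs only in presentation, recording the case-by-case checks more explicitly; your identification of the two survivors and their parameters is exactly right.
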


\begin{proof}
The right-hand side of (c) is at most $(16|\Out(T)|^2)^4$, so
\begin{equation} \label{eq:|T|Boundk=2:2}
|T| \le 2^8 |\Out(T)|^4.
\end{equation}
Since \eqref{eq:|T|Boundk=2:2} implies \eqref{eq:|T|Bound2}, any group $T$ that was ruled out using \eqref{eq:|T|Bound2} in the HS case (that is, in the proof of Lemma~\ref{HS_hammer}) is automatically ruled out here.
To rule out the remaining possibilities for $T$, we use either \eqref{eq:|T|Boundk=2:2} or \ref{eq:|T|Boundk=2}, or check that \ref{eq:|T|t'2} has no solution subject to \ref{eq:sBoundk=2}. Note that \ref{eq:|T|t'2} implies $y\le s< y^2$.

\begin{description}
\item[Case 1. $T\cong \Alt_n$ or a sporadic simple group]
If $T$ is an alternating group other than $\Alt_6$, or a sporadic simple group, then $|\Out(T)|\le 2$ and so \ref{eq:|T|Boundk=2} implies that $|T| < 3\;752$. 
Hence, $T$ is one of $\Alt_5$, $\Alt_6$, or $\Alt_7$.
If $T\cong \Alt_5$, then by \ref{eq:|T|t'2}, we have $(s+1)y=60$ and $s\mid y^2-1$,
which is impossible.
If $T\cong \Alt_7$, then we again apply \ref{eq:|T|t'2}: $(s+1)y=2520$, $s\mid y^2-1$, and $y^2\le s(s-1)+1$, which is again impossible.
Finally, we examine the case $T\cong \Alt_6$, where $|\Out(T)|=4$.
Applying \ref{eq:|T|t'2}, we have $s=19$, $y=18$ as the only valid solution. 

\item[Case 2. $T\cong A_1(q)$]
Suppose that $T\cong A_1(q)$, and write $q=p^f$ with $p$ prime and $f\ge 1$. 
Then $|T|=q(q^2-1)/(2,q-1)$, and $|\Out(T)|=(2,q-1)f$. 

Suppose first that $q$ is even, namely that $p=2$. 
Then $\gcd(2,q-1)=1$, and \ref{eq:|T|Bound} implies that
\[
2^{2f}(2^{2f}-1)^2 \le   (16 f^2-2) ((16 f^2-3)^3+1),
\]
which holds only if $f\le 7$. 
If $f=1$, then $T$ is not simple; and if $f=2$, then $T\cong \Alt_5$, which we have already ruled out. 
For $3 \le f \le 7$, there is no solution to \ref{eq:|T|t'2} subject to \ref{eq:sBoundk=2}. 

Now suppose that $q=p^f$ is odd. 
Then $\gcd(2,q-1)=2$, and hence $|\Out(T)|=2f$. 
By \ref{eq:|T|Bound}, we have
\[
p^{2 f} (p^{2 f}-1)^2\le 8 (32 f^2-1) ((64 f^2-3)^3+1),
\]
which implies that either $11\le p\le 19$ and $f=1$; $5\le p\le 7$ and $f\le 2$; or $p=3$ and $f\le 4$.
If $q=3$, then $T$ is not simple; if $q=5$, then $T\cong \Alt_5$, which we have ruled out; if $q=7$, then $T\cong A_2(2)$, which is ruled out in Case~3 below; and if $q=9$, then $T\cong \Alt_6$, which we have already dealt with in Case~1. Hence, we only need to consider $q\in\{11,13,17,19, 3^3,3^4, 5^2,7^2\}$. 
For each of these values, there is no solution to \ref{eq:|T|t'2} subject to \ref{eq:sBoundk=2}.

\item[Case 3. $T\cong A_n(q)$, $n\ge 2$]
Since \eqref{eq:|T|Boundk=2:2} implies \eqref{eq:|T|Bound2}, by comparing with the proof of Case~2 in Lemma~\ref{HS_hammer}, we see that we only need to check $T\cong A_3(3)$, and $T\cong A_2(q)$ for $q \le 13$. 
The former is ruled out by \eqref{eq:|T|Boundk=2:2}, because $|A_3(3)|=6\;065\;280 > 2^84^4 = 65\;536$. 
For $T\cong A_2(q)$, 
\ref{eq:|T|Boundk=2} implies that
\[
q^6 (q^2-1)^2 (q^3-1)^2 \le 9 \left( \frac{576}{\ln^2(2)}\ln^2(q) - 2 \right) \left( \left( \frac{576}{\ln^2(2)}\ln^2(q) - 3 \right)^3 + 1 \right)
\]
Therefore, $q\le 10$. 
For $q=2$, there is a unique solution to \ref{eq:|T|t'2} subject to \ref{eq:sBoundk=2}, namely $s=13$, $t'=11$. 
For $q \in \{3,4,5,7,8,9\}$, there are no solutions to \ref{eq:|T|t'2} subject to \ref{eq:sBoundk=2}. 

\item[Case 4. $T\cong {}^2A_n(q^2)$]
Since \eqref{eq:|T|Boundk=2:2} implies \eqref{eq:|T|Bound2}, we only need to check $T\cong {}^2A_3(q^2)$ for $2 \le q \le 4$, and $T\cong {}^2A_2(q^2)$ for $q\le 13$. 
If $(n,q) = (3,3)$ or $(3,4)$, then \eqref{eq:|T|Boundk=2:2} fails; and for $(n,q)=(3,2)$, there are no solutions to \ref{eq:|T|t'2} subject to \ref{eq:sBoundk=2}. 
For $n=2$, \ref{eq:|T|Boundk=2} gives
\[
q^6 (q^2-1)^2 (q^3+1)^2 \le 9 \left( \frac{576}{\ln^2(2)}\ln^2(q) - 2 \right) \left( \left( \frac{576}{\ln^2(2)}\ln^2(q) - 3 \right)^3 + 1 \right),
\]
and hence $q\le 10$. 
If $q=2$, then $T\cong {}^2A_2(q^2)$ is not simple. 
If $q\in\{3,4,5,7,8,9\}$, then there are no solutions to \ref{eq:|T|t'2} subject to \ref{eq:sBoundk=2}. 

\item[Case 5. Remaining possibilities for $T$]
We only need to check the groups from Case~5 of the proof of Lemma~\ref{HS_hammer} that were not ruled out by \eqref{eq:|T|Bound2} or by exceptional isomorphisms to groups that have already been handled. 
There are only two such cases. 
If $T\cong B_2(2^f)$ with $f=2$, then, using \eqref{eq:|T|Boundk=2:2} instead of \eqref{eq:|T|Bound2}, the $2^{16}$ on the right-hand side of \eqref{B2lastcase} becomes $2^{12}$, and the resulting inequality $2^{4f}(2^{2f}-1)(2^{4f}-1) \le 2^{12}f^4$ fails when $f=2$. 
If $T\cong {}^2B_2(2^{2n+1})$ with $n=1$, then \eqref{eq:|T|Boundk=2:2} fails (although \eqref{eq:|T|Bound2} does not).
\end{description}

This completes the proof of Lemma~\ref{HC_hammer}.
\end{proof}

It remains to rule out cases (i) and (ii) from Lemma~\ref{HC_hammer}. 
Using $y^2=st'+1$, we find that $t=341$ in case (i), and $t=155$ in case (ii). 
Both cases are then ruled out because the required divisibility condition $t+1 \mid |\Aut(M)|=2|T|^2|\Out(T)|^2$ fails. 
(Note that $|\Aut(M)| = 4\;147\;200$ if $T\cong \Alt_6$, and $|\Aut(M)| = 225\;792$ if $T\cong A_2(2)$.)

\section*{Acknowledgements}

We thank the referee for a careful reading of the paper, and in particular for bringing to our attention an error in a previous version of the proof of Corollary~\ref{GQcor}.

\end{document}